\title{Lie elements and the matrix-tree theorem}
\thanks{The research of
    the first named author was funded by the Russian Academic
    Excellence Project `5-100' and by the Simons--IUM fellowship 2020
    by the Simons Foundation. The second named author is supported by
    the Scholarship of the President of Russian Federation (2020).}
\author[Yu.Burman]{Yurii Burman}
\email{burman@mccme.ru}
\address{ National Research University Higher School of Economics, 119048, 6 
 Usacheva str., Moscow, Russia, and Independent University of Moscow, 
 119002, 11 B.Vlassievsky per., Moscow, Russia.}
\author[V.Kulishov]{Valeriy Kulishov}
\email{valerykulishov@mail.ru}
\address{National Research University Higher School of Economics,
  Moscow, Russia.}
\subjclass[2010]{05C50}
\keywords{Matrix-tree theorem, Lie algebra}
\date{}
\newcommand{\theoremName}{Theorem}
\newcommand{\lemmaName}{Lemma}
\newcommand{\corollaryName}{Corollary}
\newcommand{\statementName}{Proposition}
\newcommand{\remarkName}{Remark}
\newcommand{\exampleName}{Example}
\newcommand{\definitionName}{Definition}
\newcommand{\problemName}{Problem}
\newcommand{\proofName}{Proof}
\renewcommand{\proofname}{\proofName}
\newcommand{\answerName}{Answer}
\newcommand{\hintName}{Hint}
\theoremstyle{plain}
\newtheorem {theorem}{\theoremName}
\newtheorem {lemma}{\lemmaName}
\newtheorem {corollary}{\corollaryName}
\newtheorem {proposition}{\statementName}
\theoremstyle{remark}
\newtheorem{remark}{\remarkName}
\newtheorem{Remark}{\remarkName}
\newtheorem{example}{\exampleName}
\theoremstyle{definition}
\newtheorem{definition}{\definitionName}
\let\@newpf\proof 
\let\proof\relax
\def \namepf[#1] {\@newpf[\proofname\ #1]}
\newenvironment{proof}{\@ifnextchar[{\namepf}{\@newpf[\proofname]}}{\qed\endtrivlist}
\newcounter{qst}
\def \Integer {{\mathbb Z}}
\def \Complex {{\mathbb C}}
\def \lnorm#1\rnorm {\vphantom{#1}\left\|\smash{#1}\right\|}
\def \lmod#1\rmod {\vphantom{#1}\left|\smash{#1}\right|}
\newcommand \bydef {\stackrel{\mbox{\scriptsize def}}{=}}
\renewcommand \phi {\varphi}
\renewcommand \rho {\varrho}
\newcommand{\DT}[1]{#1 \dots #1}
\def \sdet {\name{sdet}}
\def \sgn {\name{sgn}}
\def \Lie {\mathop{\mathcal L}\nolimits}
\def \Tree {\mathop{\mathcal T}\nolimits}
\def \I {\mathop{\mathcal I}\nolimits}
\def \G {\mathop{\Gamma}\nolimits}
\def \E {\mathop{\mathcal E}\nolimits}
\newcommand{\lr}[1]{\langle #1\rangle}
\def \Grp {\mathop{\mathcal G}\nolimits}
\def \Alg {\mathop{\mathcal A}\nolimits}
\def \kappa {\varkappa}
\theoremstyle{plain}
\newtheorem{conjecture}[theorem]{Conjecture}
\let \c@lemma=\c@theorem
\let \c@proposition=\c@theorem
\let \c@definition=\c@theorem
\let \c@corollary=\c@theorem
\let \c@remark=\c@theorem
\let \c@example=\c@theorem
\def \symmdiff {\mathrel{\triangle}}
\begin{document}

\begin{abstract}
  For a finite-dimensional representation $V$ of a group $G$ we
  introduce and study the notion of a Lie element in the group algebra
  $k[G]$. The set $\Lie(V) \subset k[G]$ of Lie elements is a Lie
  algebra and a $G$-module acting on the original representation $V$.

  Lie elements often exhibit nice combinatorial properties. Thus, for
  a $G = S_n$ and $V$, a permutation representation, we prove a
  formula for the characteristic polynomial of a Lie element similar
  to the classical matrix-tree theorem.
\end{abstract}

\maketitle
 
\section{Introduction: Lie elements in the group algebra}

Let $V$ be a finite-dimensional representation of a group $G$ over a field 
$k$. For every $g \in G$ and every $m$ define linear operators $\Grp_m\lr{g}, 
\Alg_m\lr{g}: V^{\wedge m} \to V^{\wedge m}$ as follows: 
 \begin{align*}
\Grp_m\lr{g} (v_1 \wedge \dots \wedge v_m) &= g(v_1) \wedge \dots 
\wedge g(v_m)\\
\Alg_m\lr{g} (v_1 \wedge \dots \wedge v_m) &= \sum_{p=1}^m v_1
\wedge \dots \wedge g(v_p) \wedge \dots \wedge v_m.
 \end{align*}
(here and below $v_1, \dots, v_m$ are arbitrary vectors in $V$). Also
take by definition
\begin{align*}
  \Grp_0\lr{g} &= \I\\
  \Alg_0\lr{g} &= 0
\end{align*}
for every $g \in G$. Here and below $\I$ means the identity operator.

Denote by $k[G]$ the group algebra of $G$; extend $\Grp_m$ and
$\Alg_m$ by linearity to operators $k[G] \to \name{End}(V^{\wedge
  m})$. In particular, $\Alg_0 = 0$ and $\Grp_0\lr{\sum_{g \in G} a_g
  g} = \sum_{g \in G} a_g$ (a constant regarded as an operator $k \to
k$).

 \begin{definition}
An element $x \in k[G]$ satisfying $\Grp_m\lr{x} = \Alg_m\lr{x}$ for all $m = 0,1, 
\dots, \dim V$ is called a Lie element (with respect to the representation 
$V$). The set of Lie elements is denoted by $\Lie(V) \subset k[G]$.
 \end{definition}

 \begin{remark}\label{Rm:Sum0}
In particular, if $x = \sum_{g \in G} a_g g$ is a Lie element then $\sum_{g 
\in G} a_g = \Grp_0\lr{x} = \Alg_0\lr{x} = 0$.
 \end{remark}

 \begin{example}
Let $G = S_n$ (a permutation group), $k = \Complex$ and $V =
\Complex^n$, the permutation representation of $S_n$ (an element of
the group permutes the coordinates of a vector $v = (x_1 \DT, x_n) \in
\Complex^n$).

\begin{lemma}\label{Lm:KirchDiff}
  $\kappa_{ij} \bydef 1 - (ij) \in \Complex[S_n]$ is a Lie element.
\end{lemma}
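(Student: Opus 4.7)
The plan is to reduce the claim to checking a single operator identity and then verify it on a basis of $V^{\wedge m}$. By linearity and by the normalizations $\Grp_m\lr{1} = \I$ and $\Alg_m\lr{1} = m\I$ (the latter because each of the $m$ wedge factors contributes the identity once), the condition $\Grp_m\lr{\kappa_{ij}} = \Alg_m\lr{\kappa_{ij}}$ becomes
\[
\Alg_m\lr{(ij)} - \Grp_m\lr{(ij)} = (m-1)\,\I \qquad \text{on } V^{\wedge m}.
\]
For $m=0$ both sides are zero (since $\Grp_0\lr{\kappa_{ij}} = 1-1 = 0 = \Alg_0\lr{\kappa_{ij}}$), so it suffices to treat $m \geq 1$.

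Next I would fix a basis vector $e_I = e_{i_1} \wedge \dots \wedge e_{i_m}$ of $V^{\wedge m}$, indexed by an $m$-subset $I \subset \{1,\dots,n\}$, and split into three cases according to $|\{i,j\} \cap I|$. If $i,j \notin I$, then $(ij)$ fixes every $e_{i_p}$, so $\Grp_m\lr{(ij)} e_I = e_I$ and $\Alg_m\lr{(ij)} e_I = m\,e_I$, giving the difference $(m-1)e_I$. If $i \in I$, $j \notin I$ (or the symmetric case), then $\Grp_m\lr{(ij)} e_I$ is the wedge obtained by substituting $e_j$ for $e_i$, while $\Alg_m\lr{(ij)} e_I$ consists of $m-1$ copies of $e_I$ (from positions where $(ij)$ acts trivially) plus one copy of $\Grp_m\lr{(ij)} e_I$ (from the single position containing $e_i$), and the difference is again $(m-1)e_I$. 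If $i, j \in I$, then $\Grp_m\lr{(ij)} e_I = -e_I$ because swapping two arguments of a wedge product changes the sign, while $\Alg_m\lr{(ij)} e_I = (m-2)e_I$ because the two summands that replace $e_i$ by $e_j$ or $e_j$ by $e_i$ vanish due to repeated factors; the difference is $(m-2)e_I - (-e_I) = (m-1)e_I$.

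In all three cases the desired identity holds on each basis element $e_I$, hence it holds as an operator identity on $V^{\wedge m}$ for every $m$. The main bookkeeping is the antisymmetry sign in the third case, but no subtler ingredient is needed; nothing depends on the specific positions of $i$ and $j$ inside $I$, because the operators $\Grp_m$ and $\Alg_m$ are both expressed as wedge products in which such sign contributions cancel pairwise between the two sides.
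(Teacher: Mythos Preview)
Your proof is correct and follows essentially the same three-case analysis as the paper's argument, splitting according to $|\{i,j\}\cap I|\in\{0,1,2\}$. The only cosmetic difference is that you first rewrite the Lie-element condition as the operator identity $\Alg_m\lr{(ij)}-\Grp_m\lr{(ij)}=(m-1)\I$ and verify that, whereas the paper computes $\Grp_m\lr{1-(ij)}$ and $\Alg_m\lr{1-(ij)}$ separately in each case and checks they coincide; the underlying computations are identical.
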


Here $(ij) \in S_n$ means a transposition of $i$ and $j$; more
generally, we will use notation like $(i_1 \dots i_k)$ for a cyclic
element in $S_n$, that is, a permutation sending $i_1 \mapsto i_2
\DT\mapsto i_k \mapsto i_1$ and leaving all other elements of $\{1
\DT, n\}$ intact.

We call $\kappa_{ij} \in \Complex[S_n]$ a {\em Kirchhoff difference}
as a tribute to G.\,Kirchhoff's seminal paper \cite{Kirchhoff} (1847);
see Theorem \ref{Th:MTT} below.

\begin{proof}[of Lemma \ref{Lm:KirchDiff}]
  The proof is a direct computation. First, $\Grp_m\lr{1} = \I$ and $\Alg_m\lr{1}
  = m\I$. Obviously (cf.\ Proposition \ref{Pp:Conj} below), one can assume
  $i=1$, $j=2$ without loss of generality.  Denote by $v_1 \DT, v_n$ the
  standard basis in $\Complex^n$; by linearity, it is enough to
  consider the action of $\Grp_m\lr{(12)}$ and $\Alg_m\lr{(12)}$ on $v = v_{i_1}
  \DT\wedge v_{i_m}$ where $1 \le i_1 \DT< i_m \le n$.

  Consider now three cases:

  \begin{itemize}
  \item $i_1 \ge 3$: here $\Grp_m\lr{(12)}v = v$ and $\Alg_m\lr{(12)}v = mv$, and
    therefore $\Grp_m\lr{1-(12)}v = 0 = \Alg_m\lr{1-(12)}v$.

  \item $i_1 = 2$: here $\Grp_m\lr{(12)}v = v_1 \wedge v_{i_2}
    \DT\wedge v_{i_m}$ and $\Alg_m\lr{(12)} v = (v_1 + (m-1)v_2)
    \wedge v_{i_2} \DT\wedge v_{i_m}$, so that $\Grp_m\lr{1-(12)}v =
    \Alg_m\lr{1-(12)}v = (v_2-v_1) \wedge v_{i_2} \DT\wedge
    v_{i_m}$. The case $i_1=1$ and $i_2 \ge 3$ is similar.

  \item finally, $i_1=1, i_2=2$: here $\Grp_m\lr{(12)}v = -v$, so
    $\Alg_m\lr{(12)} v = (m-2) v$, and therefore $\Grp_m\lr{1-(12)}v =
    2v = \Alg_m\lr{1-(12)}v$.
  \end{itemize}
  Lemma is proved.
 \end{proof}
 \end{example}

Our first motivation to study Lie elements was the paper
\cite{BurmanZvonkine} where the Lie element property of the Kirchhoff
differences was used to study a question in low-dimensional topology
(see \cite[Proposition 3.4]{BurmanZvonkine}). Another reason that
makes Lie elements interesting are nice combinatorial properties of
the elements $x \in \Lie(\Complex^n)$; see e.g.\ the classical
matrix-tree theorem (Theorem \ref{Th:MTT}) and its Pfaffian version by
G.\,Masbaum and A.\.Vaintrob (Theorem \ref{Th:PfT}) below. The main
result of this paper, Theorem \ref{Th:Main}, is an analog of Theorems
\ref{Th:MTT} and \ref{Th:PfT}.

For any $G$ and $V$ the $k$-vector spaces $k[G]$ and
$\name{End}(V^{\wedge m})$ are associative algebras; consider them as
Lie algebras with the commutator bracket: $[p,q] \bydef pq-qp$.

 \begin{proposition}\label{Pp:Lie} 
Maps $\Grp_m, \Alg_m: k[G] \to \name{End}(V^{\wedge m})$ are Lie algebra 
homomorphisms. 
 \end{proposition}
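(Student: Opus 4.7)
The plan is to reduce to verifying the bracket identity on pairs of group elements, since both $\Grp_m$ and $\Alg_m$ are already $k$-linear by construction. Concretely, it suffices to show that $\Grp_m\lr{gh-hg} = [\Grp_m\lr{g},\Grp_m\lr{h}]$ and $\Alg_m\lr{gh-hg} = [\Alg_m\lr{g},\Alg_m\lr{h}]$ for all $g,h\in G$, because extending a bracket-preserving identity on a basis by bilinearity gives a Lie homomorphism on $k[G]$.

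For $\Grp_m$ the statement is actually stronger: I would show that $\Grp_m$ is even an associative algebra homomorphism. Apply the definition to $v_1\wedge\dots\wedge v_m$:
\[
\Grp_m\lr{g}\Grp_m\lr{h}(v_1\wedge\dots\wedge v_m) = g(h(v_1))\wedge\dots\wedge g(h(v_m)) = \Grp_m\lr{gh}(v_1\wedge\dots\wedge v_m).
\]
Commuting this equality with its analogue for $hg$ yields the bracket identity for free.

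For $\Alg_m$ the map is \emph{not} multiplicative, but the commutator works out. Unwinding $\Alg_m\lr{g}\Alg_m\lr{h}(v_1\wedge\dots\wedge v_m)$ as a double sum indexed by $p$ (from the outer $\Alg_m\lr{g}$) and $q$ (from the inner $\Alg_m\lr{h}$), I would split it according to whether $p=q$ or $p\neq q$. The diagonal part $p=q$ contributes $\sum_p v_1\wedge\dots\wedge gh(v_p)\wedge\dots\wedge v_m = \Alg_m\lr{gh}(v_1\wedge\dots\wedge v_m)$. The off-diagonal part is the symmetric sum $\sum_{p\neq q} v_1\wedge\dots\wedge g(v_p)\wedge\dots\wedge h(v_q)\wedge\dots\wedge v_m$; doing the same expansion for $\Alg_m\lr{h}\Alg_m\lr{g}$ and relabeling $p\leftrightarrow q$ shows the off-diagonal parts coincide. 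Subtracting, the off-diagonal contributions cancel and the diagonal contributions assemble into $\Alg_m\lr{gh-hg}$, which is the desired identity.

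There is no real obstacle here; the proof is a direct calculation. The only point requiring a little care is the cancellation of the off-diagonal ``$p\neq q$'' terms in the commutator of $\Alg_m\lr{g}$ and $\Alg_m\lr{h}$: one must notice that these terms are symmetric in $(g,p)\leftrightarrow(h,q)$ and therefore vanish upon antisymmetrization, which is precisely the mechanism that turns a non-multiplicative ``derivation-like'' map into a Lie algebra homomorphism.
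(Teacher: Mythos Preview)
Your proposal is correct and follows essentially the same approach as the paper: both observe that $\Grp_m$ is an associative (hence Lie) algebra homomorphism, and for $\Alg_m$ both expand the product as a double sum over positions, split off the diagonal $p=q$ terms (which give $\Alg_m\lr{gh}$), and note that the symmetric off-diagonal terms cancel in the commutator. The only cosmetic difference is that you reduce by bilinearity to group elements $g,h$ while the paper carries the coefficients $a_g,b_h$ through the computation.
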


 \begin{proof}
Obviously, $\Grp_m: k[G] \to \name{End}(V^{\wedge m})$ is an associative 
algebra homomorphism, hence a Lie algebra homomorphism. For $\Alg_m$ take $x = 
\sum_{g \in G} a_g g$, $y = \sum_{h \in G} b_h h$, to obtain
 \begin{align*}
\Alg_m\lr{x} &\Alg_m\lr{y} v_1 \wedge \dots \wedge v_m = \sum_{h \in 
G, 1 \le p \le m} b_h \Alg_m\lr{x} v_1 \wedge \dots \wedge 
h(v_p) \wedge \dots \wedge v_m\\
&= \sum_{g,h \in G, 1 \le p \le m} a_g b_h v_1 \wedge \dots \wedge 
g(h(v_p)) \wedge \dots \wedge v_m \\
&\hphantom{=}+ \sum_{g,h \in G, 1 \le p,q \le m, p \ne 
q} a_g b_h v_1 \wedge \dots \wedge h(v_p) \wedge \dots \wedge g(v_q) \wedge 
\dots \wedge v_m,
 \end{align*}
implying $\Alg_m\lr{[x,y]} = [\Alg_m\lr{x}, \Alg_m\lr{y}]$.
 \end{proof}

 \begin{corollary}\label{Cr:LieAlg}
The set of Lie elements $\Lie(V) \subset k[G]$ is a Lie subalgebra.
 \end{corollary}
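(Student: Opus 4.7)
The plan is to deduce the corollary directly from Proposition \ref{Pp:Lie}, using the observation that $\Lie(V)$ is characterized as the set of $x \in k[G]$ on which the two $k$-linear maps $\Grp_m$ and $\Alg_m$ agree for every $m = 0, 1, \dots, \dim V$. Equivalently, $\Lie(V) = \bigcap_{m=0}^{\dim V} \ker(\Grp_m - \Alg_m)$. Since an intersection of Lie subalgebras is a Lie subalgebra, it suffices to verify that each $\ker(\Grp_m - \Alg_m) \subset k[G]$ is one.

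First I would check that $\Lie(V)$ is a $k$-vector subspace of $k[G]$. Both $\Grp_m$ and $\Alg_m$ are $k$-linear, so $\Grp_m - \Alg_m$ is $k$-linear, and its kernel is a subspace; intersecting over $m$ preserves this. Hence $\Lie(V)$ is closed under $k$-linear combinations.

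Next, for closure under the commutator bracket, I would take $x, y \in \Lie(V)$ and compute, for each $m$,
\[
\Grp_m\lr{[x,y]} = [\Grp_m\lr{x}, \Grp_m\lr{y}] = [\Alg_m\lr{x}, \Alg_m\lr{y}] = \Alg_m\lr{[x,y]},
\]
where the outer equalities use that $\Grp_m$ and $\Alg_m$ are Lie algebra homomorphisms (Proposition \ref{Pp:Lie}) and the middle equality uses that $x$ and $y$ are Lie elements. Therefore $[x,y] \in \Lie(V)$, which finishes the proof.

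There is essentially no main obstacle: the entire content is packaged in Proposition \ref{Pp:Lie}, and the corollary is a one-line consequence of the fact that if two Lie algebra homomorphisms $\varphi, \psi : \mathfrak{g} \to \mathfrak{h}$ agree on elements $x, y$ then they agree on $[x,y]$. The only thing worth a sentence of explanation is why the kernel of the difference of two Lie homomorphisms is a Lie subalgebra, which is exactly the calculation displayed above.
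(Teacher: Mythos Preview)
Your proof is correct and matches the paper's approach: the paper states the corollary immediately after Proposition~\ref{Pp:Lie} with no separate proof, treating it as an immediate consequence, and your argument spells out exactly that consequence. The displayed chain $\Grp_m\lr{[x,y]} = [\Grp_m\lr{x},\Grp_m\lr{y}] = [\Alg_m\lr{x},\Alg_m\lr{y}] = \Alg_m\lr{[x,y]}$ is precisely the one-line deduction the paper has in mind.
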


 \begin{proposition} \label{Pp:Conj}
Operators $\Grp_m$ and $\Alg_m$ are conjugation-invariant: if $x \in k[G]$ and $y 
\in k[G]$ is invertible then for any $m$ one has $y \Grp_m\lr{x} y^{-1} = 
\Grp_m\lr{yxy^{-1}}$ and $y \Alg_m\lr{x} y^{-1} = \Alg_m\lr{yxy^{-1}}$.
 \end{proposition}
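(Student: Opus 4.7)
The plan is to handle the two identities separately, because the $\Grp_m$ part is essentially automatic while the $\Alg_m$ part requires real work. For $\Grp_m$, I would invoke the proof of Proposition~\ref{Pp:Lie}, which established that $\Grp_m\colon k[G]\to\name{End}(V^{\wedge m})$ is an associative algebra homomorphism. Combined with $\Grp_m\lr{y}\Grp_m\lr{y^{-1}} = \Grp_m\lr{y y^{-1}} = \Grp_m\lr{1} = \I$, this says $\Grp_m\lr{y^{-1}} = \Grp_m\lr{y}^{-1}$, and hence $\Grp_m\lr{yxy^{-1}} = \Grp_m\lr{y}\Grp_m\lr{x}\Grp_m\lr{y}^{-1}$ is immediate.

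For $\Alg_m$, the desired identity $\Grp_m\lr{y}\Alg_m\lr{x}\Grp_m\lr{y}^{-1} = \Alg_m\lr{yxy^{-1}}$ cannot be read off the homomorphism property, because $\Alg_m$ is only a Lie (not an associative) homomorphism. Both sides are linear in $x$, so I would first reduce to $x = g\in G$. The core calculation is then the case $y = h\in G$: applied to a simple tensor $v_1\wedge\dots\wedge v_m$, first $\Grp_m\lr{h^{-1}}$ replaces each $v_q$ by $h^{-1}(v_q)$; next $\Alg_m\lr{g}$ produces the sum $\sum_p h^{-1}(v_1)\wedge\dots\wedge g(h^{-1}(v_p))\wedge\dots\wedge h^{-1}(v_m)$; and finally $\Grp_m\lr{h}$ applies $h$ uniformly, restoring $v_q = h(h^{-1}(v_q))$ in the unmodified slots while replacing the $p$-th entry by $hgh^{-1}(v_p) = h(g(h^{-1}(v_p)))$. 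The result is exactly $\Alg_m\lr{hgh^{-1}}(v_1\wedge\dots\wedge v_m)$, as desired.

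The main obstacle is the passage from $y\in G$ to arbitrary invertible $y\in k[G]$: the expression $y\mapsto yxy^{-1}$ is not linear in $y$ because of the inverse, so a bare bilinearity argument is unavailable. To push through, I would expand $y = \sum_h b_h h$ and $y^{-1} = \sum_k c_k k$, use linearity of $\Grp_m$ to write $\Grp_m\lr{y} = \sum_h b_h\Grp_m\lr{h}$ and $\Grp_m\lr{y^{-1}} = \sum_k c_k\Grp_m\lr{k}$, and then combine the group-element identity just established with the relations $\sum_{h,k} b_h c_k\Grp_m\lr{hk} = \Grp_m\lr{yy^{-1}} = \I$ (and its siblings) coming from the algebra-homomorphism property of $\Grp_m$. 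This careful interlacing of the Lie-theoretic information coming from $\Alg_m$ on group elements with the associative-algebraic information coming from $\Grp_m$ is the step where all the delicacy lies.
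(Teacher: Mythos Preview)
The paper's own ``proof'' is the single sentence ``The proof is straightforward,'' so there is no detailed argument to compare against. Your treatment of $\Grp_m$ via the algebra-homomorphism property, and your direct verification of the $\Alg_m$ identity when $y=h\in G$, are correct and are surely what the authors intend; note that every actual use of Proposition~\ref{Pp:Conj} in the paper (the reduction in Lemma~\ref{Lm:KirchDiff} and the Corollary immediately following the proposition) invokes only conjugation by a genuine group element.

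Your instinct that the passage to arbitrary invertible $y\in k[G]$ is the delicate point is sound, but the ``interlacing'' you sketch does not close the gap. Expanding $y=\sum_h b_h h$ and $y^{-1}=\sum_k c_k k$ and feeding in the group-element identity $\Grp_m\lr{h}\Alg_m\lr{x}=\Alg_m\lr{hxh^{-1}}\Grp_m\lr{h}$ gives
\[
\Grp_m\lr{y}\,\Alg_m\lr{x}\,\Grp_m\lr{y^{-1}}
=\sum_{h,k} b_h c_k\,\Alg_m\lr{hxh^{-1}}\,\Grp_m\lr{hk},
\]
and the constraint $\sum_{h,k}b_hc_k\,hk=1$ in $k[G]$ cannot be applied directly: the factor $\Alg_m\lr{hxh^{-1}}$ depends on $h$ alone, while $\Grp_m\lr{hk}$ depends on the product $hk$, and the two occurrences of $h$ do not decouple. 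The same obstruction appears if one instead pushes $\Grp_m\lr{k}$ past $\Alg_m\lr{x}$ first. A genuine extra idea is needed here (for instance, one must exploit that $\Alg_m\lr{x}=D_m(\rho(x))$ factors through $\name{End}(V)$, together with more than just the relation $yy^{-1}=1$); as written, your last paragraph is a plan rather than a proof. Since the paper itself supplies no details and only ever needs $y\in G$, it is plausible the authors either had this easy case in mind or regarded the general statement as routine without checking the bookkeeping.
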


The proof is straightforward.

 \begin{corollary}
The Lie algebra $\Lie(V) \subset k[G]$ is a representation of $G$
where elements of the group act by conjugation.
 \end{corollary}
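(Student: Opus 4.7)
The plan is to verify two things: first, that the subspace $\Lie(V) \subset k[G]$ is preserved by the conjugation action $x \mapsto gxg^{-1}$ of $G$; and second, that conjugation defines a $k$-linear left $G$-action on it. The second part is formal and independent of the Lie-element condition, so the real content is invariance.

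For invariance, I would fix $x \in \Lie(V)$ and $g \in G$, observe that $g$ is invertible in $k[G]$ with inverse $g^{-1}$, and apply Proposition \ref{Pp:Conj} with $y = g$ to both $\Grp_m$ and $\Alg_m$. This gives, for every $m$,
\[
\Grp_m\lr{gxg^{-1}} = \Grp_m\lr{g}\,\Grp_m\lr{x}\,\Grp_m\lr{g}^{-1}, \qquad \Alg_m\lr{gxg^{-1}} = \Grp_m\lr{g}\,\Alg_m\lr{x}\,\Grp_m\lr{g}^{-1}.
\]
The crucial feature is that both conjugations are implemented by the \emph{same} intertwiner $\Grp_m\lr{g}$. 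Since $\Grp_m\lr{x} = \Alg_m\lr{x}$ by hypothesis, the two right-hand sides coincide, and hence $gxg^{-1} \in \Lie(V)$.

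Once invariance is proved, the action axioms for conjugation are immediate: the map $x \mapsto gxg^{-1}$ is $k$-linear on $k[G]$ and therefore restricts to a $k$-linear map on $\Lie(V)$; the identities $exe^{-1} = x$ and $(g_1 g_2)x(g_1 g_2)^{-1} = g_1(g_2 x g_2^{-1})g_1^{-1}$ supply the unit and associativity axioms of a left $G$-module structure. There is essentially no obstacle here; the corollary is a routine consequence of Proposition \ref{Pp:Conj}, the substantive ingredient being precisely that both $\Grp_m$ and $\Alg_m$ are conjugation-equivariant through the same operator $\Grp_m\lr{g}$.
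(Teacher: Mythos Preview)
Your argument is correct and is exactly the intended one: the paper states this corollary immediately after Proposition \ref{Pp:Conj} without further proof, treating it as a direct consequence, and your write-up spells out precisely that deduction. The one clarification worth noting is that in the paper's notation ``$y\,\Grp_m\lr{x}\,y^{-1}$'' the element $y\in k[G]$ acts on $V^{\wedge m}$ through $\Grp_m$, so your rewriting of both identities with the conjugator $\Grp_m\lr{g}$ is the correct reading of Proposition \ref{Pp:Conj}.
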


\section{Lie elements in the relection representation of the permutation group}

\subsection{Kirchhoff differences and their commutators}

Let $G = S_n$ (a permutation group), and $V$ be its reflection
(a.k.a.\ Coxeter or geometric) representation; $\dim V = n-1$.  The
permutation representation $\Complex^n$ is a sum of $V = \{(x_1 \DT,
x_n) \mid x_1 \DT+ x_n = 0\}$ and a trivial representation $\mathbf 1
= \{(x, x \DT, x) \mid x \in \Complex\}$. It follows from Remark
\ref{Rm:Sum0} that any Lie element $x \in \Lie(\Complex^n)$ acts on
$\mathbf 1$ by zero. Therefore, $\Lie(V) = \Lie(\Complex^n)$; we'll
denote it $\Lie_n$ for short.

For a finite-dimensional representation $W$ of $S_n$ denote by
$\chi_x^W(t) = \det (t\I-x)$ the characteristic polynomial of an
element $x \in \Complex[S_n]$ acting in $W$. It follows from the
remarks above that $\chi_x^{\Complex^n}(t) = t\chi_x^V(t)$ for all $x
\in \Lie_n$.

 \begin{theorem}[\cite{LiePrepr}, cf.\ \cite{BurmanZvonkine}]\label{Th:Transp}
   \begin{enumerate}
\item\label{It:Lie} For all pairwise distinct $i, j, k, l \in \{1 \DT,
  n\}$ elements $\kappa_{ij} \bydef 1 - (ij)$, $\nu_{ijk} \bydef
  (ijk)-(ikj)$ and $\eta_{ijkl} \bydef (ijkl)+(ilkj)-(ijlk)-(iklj)$
  belong to $\Lie_n$.

\item\label{It:Dim} Consider, for all $1 \le i < j < k < l \le n$,
  vector spaces $K_{ij}, N_{ijk}, H_{ijkl} \subset \Complex[S_n]$
  spanned by all $\kappa_{pq}$, $\nu_{pqr}$, $\eta_{pqrs}$ where the
  indices $(p,q)$, $(p,q,r)$ and $(p,q,r,s)$ are permutations of
  $(i,j)$, $(i,j,k)$ and $(i,j,k,l)$, respectively. Then $\dim K_{ij}
  = 1$, $\dim N_{ijk} = 1$ and $\dim H_{ijkl} = 2$; bases in them
  are $\{\kappa_{ij}\}$, $\{\nu_{ijk}\}$ and $\{\eta_{ijkl},
  \eta_{iklj}\}$.

\item\label{It:Repr} Let permutation groups $S_2$, $S_3$ and $S_4$ act
  on $K_{ij}$, $N_{ijk}$ and $H_{ijkl}$ permuting indices of the
  elements $\kappa_{pq}$, $\nu_{pqr}$ and $\eta_{pqrs}$. This makes
  $K_{ij}$ a trivial representation of $S_2$, $N_{ijk}$, a sign
  representation of $S_3$, and $H_{ijkl}$, an irreducible
  $2$-dimensional representation of $S_4$.

\item\label{It:Relat} Elements $\kappa_{pq} \in K_{ij}$, $\nu_{pqr}
  \in N_{ijk}$ and $\eta_{pqrs} \in H_{ijkl}$ enjoy the following
  symmetries (for all $p, q, r, s$):\\
  for $K_{ij}$:
  \begin{equation*}
    \kappa_{qp} = \kappa_{pq};
  \end{equation*}
  for $N_{ijk}$:
  \begin{equation*}
    \nu_{pqr} = \nu_{qrp} = -\nu_{qpr};    
  \end{equation*}
  for $H_{ijkl}$:
  \begin{align*}
    &\eta_{pqrs} = -\eta_{qprs} = -\eta_{pqsr}, \\
    &\eta_{pqrs} = \eta_{srqp} = \eta_{rspq} = \eta_{qpsr},\\
    &\eta_{pqrs} + \eta_{prsq} + \eta_{psqr} = 0.
  \end{align*}
  \end{enumerate}
 \end{theorem}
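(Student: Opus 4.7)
My plan is to prove part (1) by reducing it to Lemma \ref{Lm:KirchDiff} and Corollary \ref{Cr:LieAlg}, writing $\nu$ and $\eta$ as iterated commutators of Kirchhoff differences. A short calculation in the group algebra gives
\[ [\kappa_{ij},\kappa_{jk}] = (ij)(jk)-(jk)(ij) = (ijk)-(ikj) = \nu_{ijk}, \]
and a similar computation yields $\eta_{ijkl} = -[\nu_{ijk},\kappa_{kl}] = -[[\kappa_{ij},\kappa_{jk}],\kappa_{kl}]$. Since $\Lie_n$ is closed under commutators, both $\nu_{ijk}$ and $\eta_{ijkl}$ lie in $\Lie_n$.

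Part (4) I will handle by direct verification in cycle notation. The $\kappa$ and $\nu$ symmetries follow from the convention that $(i_1\dots i_k)$ depends only on the cyclic order and from the fact that reversing orientation inverts the cycle. The $\eta$ symmetries come in three groups: the sign-change identities $\eta_{pqrs}=-\eta_{qprs}=-\eta_{pqsr}$ follow from swapping adjacent entries in each participating $4$-cycle; the second block $\eta_{pqrs}=\eta_{srqp}=\eta_{rspq}=\eta_{qpsr}$ expresses Klein-four invariance and comes from matching identities among $4$-cycles; and the triple-sum relation $\eta_{pqrs}+\eta_{prsq}+\eta_{psqr}=0$ is obtained by expanding all three terms and observing that each of the six distinct $4$-cycles on $\{p,q,r,s\}$ appears with signs that cancel.

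Parts (2) and (3) then follow from (4). For (2), the symmetries collapse $K_{ij}$, $N_{ijk}$ and $H_{ijkl}$ to the spans of at most one, one, and two distinct elements respectively; for $H_{ijkl}$, the sign-change symmetries cut the $24$ indexings to $6$ classes up to sign, the Klein-four symmetries cut to $3$, and the triple-sum relation leaves $2$. Linear independence of $\{\eta_{ijkl},\eta_{iklj}\}$ is checked by expanding and observing that the $4$-cycles involved are all distinct elements of $S_n$. For (3), the $S_2$-action on $K_{ij}$ is trivial; on $N_{ijk}$ cyclic shifts fix $\nu_{ijk}$ and transpositions negate it, giving the sign representation. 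For $H_{ijkl}$, the normal Klein-four-subgroup $V_4\subset S_4$ acts trivially by the second block of $\eta$-symmetries, so the action factors through $S_4/V_4\cong S_3$. Since $(pq)\cdot\eta_{pqrs} = -\eta_{pqrs}$, transpositions act nontrivially, ruling out any decomposition into one-dimensional pieces, so the two-dimensional action is forced to be the unique irreducible two-dimensional representation.

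The main obstacle is the combinatorial bookkeeping in (4): the $\eta$-symmetries require careful cycle arithmetic and there are several to verify. Once these are in hand, parts (1)--(3) follow from short commutator computations and routine reductions as sketched above.
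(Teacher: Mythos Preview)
Your plan is correct and follows essentially the same route as the paper: part~(1) via iterated commutators of Kirchhoff differences (the paper writes $\eta_{ijkl}=[\kappa_{il},\nu_{ijk}]$ while you use $-[\nu_{ijk},\kappa_{kl}]$, but either works), part~(4) by direct cycle arithmetic, and parts~(2)--(3) deduced from~(4). One small point to tighten: in your irreducibility argument for $H_{ijkl}$, the observation that $(pq)\cdot\eta_{pqrs}=-\eta_{pqrs}$ alone does not rule out a decomposition such as $\mathrm{sign}\oplus\mathrm{sign}$; you still need, for instance, to check that a second transposition (say $(ik)$) does not act by a scalar on $\eta_{ijkl}$, or equivalently that the two basis vectors are $(-1)$-eigenvectors for \emph{different} transpositions, which is exactly the remark the paper makes.
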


 \begin{proof}
  Kirchhoff differences $\kappa_{ij}$ belong to $\Lie_n$ by Lemma
  \ref{Lm:KirchDiff}. $\nu_{ijk}$ and $\eta_{ijkl}$ are commutators of
  the $\kappa_{ij}$: $\nu_{ijk} = [\kappa_{ij},\kappa_{jk}]$ and
  $\eta_{ijkl} = [\kappa_{il},\nu_{ijk}]$. So by Corollary
  \ref{Cr:LieAlg} assertion \ref{It:Lie} is proved.

  Relations of assertion \ref{It:Relat} can be checked immediately.  A
  straightforward computation shows that these relations imply
  assertions \ref{It:Dim} and \ref{It:Repr}. The first relation for
  $H_{ijkl}$ means that the basic elements $\eta_{ijkl}, \eta_{iklj}$
  are eigenvectors of the transpositions $(12)$ and $(13)$,
  respectively, with the eigenvalue $-1$.
 \end{proof}

 \begin{conjecture}\label{Cj:KirchGen}
The Lie algebra $\Lie_n$ is generated by the Kirchhoff differences
$\kappa_{ij}$, $1 \le i < j \le n$.
 \end{conjecture}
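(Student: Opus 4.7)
The plan is to prove the equality $\Lie_n = L_n$, where $L_n$ denotes the Lie subalgebra of $\Complex[S_n]$ generated by $\{\kappa_{ij}\}_{1\le i<j\le n}$; the inclusion $L_n\subseteq\Lie_n$ follows from Lemma \ref{Lm:KirchDiff} and Corollary \ref{Cr:LieAlg}. Both $L_n$ and $\Lie_n$ are stable under conjugation by $S_n$ (for $L_n$ because $\sigma\kappa_{ij}\sigma^{-1}=\kappa_{\sigma(i)\sigma(j)}$; for $\Lie_n$ by Proposition \ref{Pp:Conj}), so matching the two reduces to matching their isotypic decompositions as $S_n$-modules.

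For the upper bound on $\Lie_n$, observe that $\Phi_m\bydef\Grp_m-\Alg_m: \Complex[S_n]\to\name{End}(V^{\wedge m})$ is an $S_n$-equivariant linear map for every $m$, and $\Lie_n=\bigcap_{m=0}^{n-1}\ker\Phi_m$. Since the character of $\name{End}(V^{\wedge m})\cong V^{\wedge m}\otimes(V^{\wedge m})^*$ can be expanded in terms of the character of $V$, one can in principle determine the multiplicity of each irreducible $V_\mu$ of $S_n$ in $\Lie_n$. For the lower bound on $L_n$, the plan is to produce explicit iterated brackets of Kirchhoff differences: the generating space $\text{span}\{\kappa_{ij}\}$ realizes the permutation representation on $2$-subsets of $\{1\DT,n\}$, which for $n\ge 4$ decomposes as $V_{(n)}\oplus V_{(n-1,1)}\oplus V_{(n-2,2)}$; further commutators already recover $\{\nu_{ijk}\}$ and $\{\eta_{ijkl}\}$ from Theorem \ref{Th:Transp}, and the task is to continue generating until every irreducible predicted by the upper bound is matched.

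The principal obstacle is that neither side admits an obvious closed-form character. On the $\Lie_n$ side, unpacking the common kernel of the $\Phi_m$ seems to require a detailed plethysm analysis of the exterior powers of $V$. On the $L_n$ side, there is a priori no bound on the bracket depth needed to reach Lie elements supported on long cycles, so one needs a combinatorial handle on which iterated commutators of Kirchhoff differences land in each conjugacy class $C_\lambda\subset S_n$. A parallel route worth trying is induction on $n$: verify the conjecture by direct computation for small $n$ (say, $n\le 6$), then identify a complement $W_n$ so that $\Lie_n=\Lie_{n-1}\oplus W_n$ as $S_{n-1}$-modules and show $W_n\subseteq\sum_{i<n}[\kappa_{in},\Lie_n]$; the delicate step is describing $W_n$ intrinsically enough to run the induction.
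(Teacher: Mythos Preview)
The statement you are attempting to prove is labeled a \emph{conjecture} in the paper, and the authors explicitly write that it ``was tested numerically for small $n$, but we do not know its proof at the moment.'' There is therefore no proof in the paper to compare your proposal against.

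Your proposal, in turn, is not a proof but a research outline. You set up a reasonable framework (compare the $S_n$-isotypic decompositions of $L_n$ and $\Lie_n$, bounding $\Lie_n$ from above via the kernels of $\Phi_m=\Grp_m-\Alg_m$ and $L_n$ from below by exhibiting iterated brackets), but you then candidly list the obstacles without resolving any of them: no closed-form character for either side, an uncontrolled plethysm on the $\Lie_n$ side, no a priori bound on bracket depth on the $L_n$ side, and no intrinsic description of the putative complement $W_n$ for the inductive approach. Each of these is a genuine gap, not a technicality; in particular, the step ``continue generating until every irreducible predicted by the upper bound is matched'' is precisely the content of the conjecture and is left as an aspiration rather than an argument.

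In short: the paper has no proof, and neither does your proposal. What you have written is a sensible discussion of possible strategies, consistent with the conjecture being open.
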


This conjecture was tested numerically for small $n$, but we do not
know its proof at the moment.
 
Characterstic polynomials of Lie elements $x \in \Lie_n$ acting at $V$
are often given by nice formulas.

 \begin{example}
Let $\Gamma$ be a finite graph with the vertex set $\{1 \DT, n\}$ and
the edges $e_1 \DT, e_m$ where $e_s$ connects vertices $i_s$ and
$j_s$; denote $w_\Gamma \bydef w_{i_1 j_1} \dots w_{i_m j_m}$. Also
denote by $\Tree_n$ the set of trees with the vertices $1 \DT, n$.

Consider the Lie element
 \begin{equation*}
x = \sum_{1 \le i < j \le n} w_{ij} \kappa_{ij};
 \end{equation*}
and assume $w_{ji} = w_{ij}$ for convenience.

 \begin{theorem}[matrix-tree theorem, \cite{Kirchhoff}] \label{Th:MTT}
 \begin{equation*}
\det \left. x\right|_V = \chi_x^V(0) = n\sum_{\Gamma \in \Tree_n} w_\Gamma.
 \end{equation*}
 \end{theorem}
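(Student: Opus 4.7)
The plan is to identify the operator $x$ acting on $\Complex^n$ with the weighted Laplacian of the complete graph $K_n$, and then to evaluate its restriction to $V$ via the Cauchy--Binet formula applied to an incidence-matrix factorisation.

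First I would compute the matrix of $x$ in the standard basis $e_1, \dots, e_n$ of $\Complex^n$. Since a transposition permutes coordinates, $\kappa_{ij}e_k = 0$ for $k \notin \{i, j\}$, while $\kappa_{ij}e_i = e_i - e_j$ and $\kappa_{ij}e_j = e_j - e_i$. Summing with weights, $x$ is represented by the weighted Laplacian $L$ with $L_{kk} = \sum_{j \ne k} w_{kj}$ and $L_{kj} = -w_{kj}$ for $k \ne j$. Its row sums vanish, so the all-ones vector lies in $\ker L$; as this vector also spans the trivial summand complementary to $V$, the restriction $\det(x|_V)$ equals the product of the $n - 1$ nonzero eigenvalues of $L$.

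Next I would factor $L = B W B^{T}$, where $B$ is the $n \times \binom{n}{2}$ signed incidence matrix of $K_n$ (the column indexed by the edge $ij$ with $i < j$ has $+1$ at row $i$, $-1$ at row $j$, and zeros elsewhere), and $W$ is the diagonal matrix with entries $w_{ij}$. For a fixed vertex $k$, let $B_{\hat k}$ denote $B$ with row $k$ deleted. The Cauchy--Binet formula then gives
\begin{equation*}
\det\bigl(B_{\hat k} W B_{\hat k}^{T}\bigr) = \sum_{S} \bigl(\det B_{\hat k}[S]\bigr)^{2} \prod_{ij \in S} w_{ij},
\end{equation*}
the sum ranging over $(n-1)$-element edge subsets $S$. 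A standard leaf-induction on $S$ (when $S$ is a tree, some vertex $\ne k$ has degree one, and the corresponding column of $B_{\hat k}[S]$ has a single nonzero entry) shows $\det B_{\hat k}[S] = \pm 1$ when $S$ is a spanning tree on $\{1, \dots, n\}$ and $0$ otherwise. Hence every $(k,k)$-principal $(n-1) \times (n-1)$ minor of $L$ equals $\sum_{\Gamma \in \Tree_n} w_\Gamma$.

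To finish, I would match coefficients in the identity $\chi_x^{\Complex^n}(t) = t\, \chi_x^V(t)$: the coefficient of $t$ in $\det(t\I - L)$ equals, up to the usual sign $(-1)^{n-1}$, the sum of all $n$ principal $(n-1) \times (n-1)$ minors of $L$, while on the right it equals $\chi_x^V(0)$. Since each such minor contributes the same tree sum, the identity yields the stated formula, with the factor of $n$ arising from the sum over the $n$ choices of $k$. The combinatorial heart of the proof is the identification of $\det B_{\hat k}[S]$ with $\pm 1$ exactly on spanning trees; the remaining steps are routine linear-algebraic bookkeeping.
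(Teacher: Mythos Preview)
Your argument is correct and is the standard incidence-matrix/Cauchy--Binet proof of the matrix-tree theorem. One small wording issue: calling $\det(x|_V)$ ``the product of the $n-1$ nonzero eigenvalues of $L$'' presumes the restriction to $V$ is nonsingular, which need not hold for arbitrary weights; but you do not actually use this, since the subsequent comparison of coefficients in $\chi_x^{\Complex^n}(t)=t\,\chi_x^V(t)$ is valid regardless, and the $(-1)^{n-1}$ factors on both sides cancel exactly as you indicate.

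As for comparison with the paper: there is nothing to compare against. The paper does not prove Theorem~\ref{Th:MTT}; it quotes it as Kirchhoff's classical result, refers to \cite{Chaiken} for a survey, and mentions that \cite{BPT} gives a uniform proof (via ``discrete path integration'') of Theorems~\ref{Th:MTT} and~\ref{Th:PfT}. The technique of \cite{BPT}, which the paper does invoke to prove its own Theorem~\ref{Th:Main}, writes the operator as a sum of rank-one pieces $M[\alpha,v]$ and expands the characteristic polynomial directly, rather than going through the Laplacian factorisation $L=BWB^{T}$ and Cauchy--Binet. Your route is more elementary and self-contained for this particular statement; the path-integration approach of \cite{BPT} is less direct here but has the advantage of handling the $\nu_{ijk}$ and $\eta_{ijkl}$ cases in the same framework.
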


There exist similar formulas for other coefficients of $\chi_x^V$ as
well; for details see the review \cite{Chaiken} and the references
therein.
 \end{example}

 \begin{example}
A finite $3$-graph is defined as a union of several solid triangles
(called $3$-edges) with some of their vertices glued. A $3$-graph is
called a $3$-tree if it is contractible (as a topological space). The
number $n$ of vertices of a $3$-tree is always odd: $n = 2m+1$ where
$m$ is the number of $3$-edges; denote by $\Tree_m^{(3)}$ the set of
$3$-trees with the vertices $1 \DT, 2m+1$.

Consider the Lie element
\begin{equation*}
y = \sum_{1 \le  i < j < k \le n} w_{ijk}\nu_{ijk};
\end{equation*}
assume for convenience $w_{jki} = w_{kij} = w_{ijk}$ and $w_{jik} =
w_{ikj} = w_{kji} = -w_{ijk}$ (cf.\ assertion \ref{It:Relat} of
Theorem \ref{Th:Transp}).

Let $\Gamma$ be a $3$-graph and $e_1 \DT, e_m$, its $3$-edges; the
edge $e_s$ is a triangle with the vertices $i_s, j_s, k_s \in \{1 \DT,
n\}$. Denote $w_\Gamma \bydef w_{i_1 j_1 k_1} \dots w_{i_m j_m k_m}$.

It is easy to observe that the operator $\nu_{ijk}: V \to V$ (and
hence, the operator $y: V \to V$) is skew-symmetric with respect to
the standard scalar product in $V$ (inherited from $\Complex^n$). So
if $n$ is even and $\dim V = n-1$ is odd, then $\det \left. y\right|_V =
0$. If $n = 2m+1$ is odd then the skew-symmetric operator
$\left. y\right|_V$ has a Pfaffian described below.

Folllowing \cite{MV}, define a sign $\delta(\Gamma) = \pm1$ of a
$3$-tree $\Gamma \in \Tree_m^{(3)}$ as follows. Denote, like above,
the vertices of the $s$-th edge $e_s$ of $\Gamma$ as $i_s < j_s <
k_s$; here $s = 1 \DT, m$. Consider a product of the $3$-cycles
$\sigma \bydef (i_1 j_1 k_1) \dots (i_m j_m k_m) \in S_n$. An easy
induction by $m$ shows that $\sigma$ is a cyclic permutation $(a_1
\dots a_n)$. Now define a permutation $\tau \in S_n$ as $\tau(s) =
a_s$, $s = 1 \DT, n$; the sign $\delta(\Gamma)$ is then defined as the
parity of $\tau$. See \cite{MV} for details; in particular, it is
proved there that $\delta(\Gamma)$ does not depend on the ordering of
the edges of $\Gamma$.

 \begin{theorem}[\cite{MV}] \label{Th:PfT}
$\name{Pf} \left.y\right|_V = n \sum_{\Gamma \in \Tree_n^{(3)}}
   \delta(\Gamma) w_\Gamma$.
 \end{theorem}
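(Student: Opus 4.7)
The plan is to adapt the proof of the classical matrix-tree theorem (Theorem \ref{Th:MTT}) to the Pfaffian setting, with $3$-edges replacing ordinary edges. First I would compute the matrix $A$ of $y$ in the standard basis $v_1, \dots, v_n$ of $\Complex^n$. The formulas $\nu_{ijk}(v_i) = v_j - v_k$, $\nu_{ijk}(v_j) = v_k - v_i$, $\nu_{ijk}(v_k) = v_i - v_j$ show that $A$ is skew-symmetric, that $A\mathbf 1 = 0$ (so every row sums to zero), and that the entry $A_{pq}$ depends only on the weights $w_{ijk}$ of those triples containing both $p$ and $q$. In analogy with the Laplacian case, the Pfaffian $\name{Pf}(\left.y\right|_V)$ can be read off from the principal $(n-1)\times(n-1)$ sub-Pfaffian $\widehat A$ of $A$ obtained by deleting (say) row and column $n$; the overall factor of $n$ in the statement then emerges from averaging over the choice of which row-column to delete, in the same way as the classical identity $\det\left.L\right|_V = n\kappa(G)$.

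Second, I would expand the reduced Pfaffian as
\[
\name{Pf}(\widehat A) = \frac{1}{2^m m!} \sum_{\sigma \in S_{2m}} \sgn(\sigma) \prod_{s=1}^m \widehat A_{\sigma(2s-1)\,\sigma(2s)},
\]
substitute the explicit entries, and open the brackets. The resulting sum is indexed by ordered $m$-tuples of oriented triangles on $\{1, \dots, n\}$; grouping these terms by the underlying unordered $3$-graph $\Gamma$ and exploiting the skew-symmetries of $\nu_{pqr}$ from assertion \ref{It:Relat} of Theorem \ref{Th:Transp} turns the prefactor $\frac{1}{2^m m!}$ into a definite combinatorial weight attached to each $3$-graph with $m$ edges.

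Third, and this is the combinatorial heart of the argument, I would produce a sign-reversing involution on the set of $3$-graphs $\Gamma$ which are not $3$-trees, showing that their contributions cancel in pairs. The involution should locate a canonical pair of $3$-edges that together ``close a cycle'' (topologically: obstruct contractibility of $\Gamma$) and swap two of their shared vertices, changing the sign of the expansion term while preserving its absolute value. Once the non-tree contributions vanish, what remains is to verify that each $3$-tree $\Gamma$ contributes exactly $\delta(\Gamma)w_\Gamma$, with $\delta(\Gamma)$ realised as the parity of the permutation $\tau$ built from the product of the $3$-cycles $(i_s j_s k_s)$ as in the definition preceding the statement.

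The hardest step is clearly the third: the involution has to be genuinely canonical, independent of the ordering of the $3$-edges and of the orientation of each triangle used to expand the Pfaffian, and there is no slack in the sign bookkeeping. A cleaner alternative would be to write $\left.y\right|_V = \sum_{i<j<k} w_{ijk}\left.\nu_{ijk}\right|_V$ as a sum of rank-two skew-symmetric operators and invoke a Pfaffian Cauchy--Binet identity (in the style of Ishikawa--Wakayama), which would automatically restrict the sum to $m$-element collections of $3$-edges and force the $3$-tree condition through a rank computation; this is close in spirit to the argument of Masbaum and Vaintrob in \cite{MV}.
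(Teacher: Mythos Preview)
The paper does not prove Theorem~\ref{Th:PfT} at all: it is quoted as a result of Masbaum and Vaintrob~\cite{MV}, and the only further comment is that the discrete path integration technique of~\cite{BPT} yields a uniform proof of Theorems~\ref{Th:MTT} and~\ref{Th:PfT}. So there is no ``paper's own proof'' to compare your proposal against; what one can compare with is the approach of~\cite{MV} and the rank-decomposition philosophy of~\cite{BPT}.

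Viewed on its own, your main plan has a genuine gap at step three. You assert that a sign-reversing involution on non-tree $3$-graphs exists, locating ``a canonical pair of $3$-edges that together close a cycle'' and swapping two shared vertices, but you neither construct it nor argue that such a canonical choice can be made compatibly with the Pfaffian signs. This is the entire content of the theorem; unlike the ordinary matrix-tree case, the cancellation here involves a delicate interplay between the parity of the matching in the Pfaffian expansion and the orientation conventions for the $3$-edges, and naive ``swap two vertices in a repeated edge'' moves do not obviously give a fixed-point-free involution on the relevant set. Also, your claim that the factor $n$ arises by ``averaging over the choice of which row-column to delete'' is not quite right in the Pfaffian setting: the principal sub-Pfaffians of a skew-symmetric matrix with zero row sums are equal only up to sign, and the correct passage from $\name{Pf}(\left.y\right|_V)$ to a principal $(n-1)\times(n-1)$ sub-Pfaffian requires an explicit change-of-basis computation rather than an averaging argument.

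Your stated alternative --- writing $\left.y\right|_V$ as a sum of rank-two skew operators and applying a Pfaffian Cauchy--Binet identity --- is in fact the substantive route: it is essentially what \cite{MV} does, and it is also the shape of the \cite{BPT} argument alluded to in the paper (compare Lemma~\ref{Lm:Rank2} and the use of \cite[Corollary~2.4]{BPT} in the proof of Theorem~\ref{Th:Main}). If you want a complete proof, pursue that line rather than the involution sketch.
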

 \end{example}

The article \cite{BPT} describes a technique (called discrete path
integration) giving a uniform proof of Theorems \ref{Th:MTT} and
\ref{Th:PfT} and of some more similar statements as well. We are going
to use this technique to prove the main result of the paper, Theorem
\ref{Th:Main}.

\subsection{The main theorem}\label{Sec:MainTh}

Theorem \ref{Th:Main} is a formula for the characteristic polynomial
of the Lie element
 \begin{equation}\label{Eq:WSum}
z = \sum_{1 \le i < j < k < l \le n} \xi_{ijkl}: V \to V.
 \end{equation}
where $\xi_{ijkl} \in H_{ijkl}$ are arbitrary elements (see Theorem
\ref{Th:Transp} above), that is, $\xi_{ijkl} = w_{ijkl} \eta_{ijkl} +
w_{iklj} \eta_{iklj}$ for some $w_{ijkl}, w_{iklj} \in \Complex$.
 
Let $A = (a_{ij})$ and $B = (b_{ij})$ be $n \times n$-matrices, and $I
\subseteq \{1 \DT, n\}$. Their {\em $I$-shuffle} is defined as a $n
\times n$-matrix $(A,B)_I = (u_{ij})$ where
 \begin{equation*}
u_{ij} = \begin{cases}
a_{ij}, &i \in I,\\
b_{ij}, &i \notin I.
 \end{cases}
 \end{equation*}

 \begin{definition}\label{Df:SDet}
The {\em shuffle determinant} of the matrices $A$ and $B$ is
 \begin{equation*}
\sdet(A,B) \bydef \sum_{I \subseteq \{1 \DT, n\}} \det (A,B)_I \det 
(A,B)_{\bar I}.
 \end{equation*}
where bar means the complement: $\bar I \bydef \{1 \DT, n\} \setminus I$. 
 \end{definition}

Let $\E$ be a $r$-element set, which elements are $4$-tuples
$(i_1,j_1,k_1,l_1) \DT, (i_r,j_r,k_r,l_r)$; here $1 \le r \le
n$. Consider the vector space $H_{\E} = \bigotimes_{s=1}^r
H_{i_sj_sk_sl_s}$ of dimension $2^r$ and define a linear functional
$\Phi_r: H_{\E} \to \Complex$ as follows. Let $A_{\E}, B_{\E}$ be $r
\times n$-matrices with the elements
\begin{equation}\label{Eq:DefAB}
  \begin{aligned}
    &(A_{\E})_{si_s} = 1, (A_{\E})_{sj_s} = -1, \\
    &(B_{\E})_{sk_s} = 1, (B_{\E})_{sl_s} = -1,\\
    &(A_{\E})_{ij} = (B_{\E})_{ij} = 0 \qquad\text{for all other $i,j$};
  \end{aligned}
\end{equation}
here $s = 1 \DT, r$. For a $r$-element set $J \subseteq \{1 \DT, n\}$
denote by $A_{\E}^J$ and $B_{\E}^J$ the $r \times r$-submatrices of
$A_{\E}$ and $B_{\E}$, respectively, containing all the $r$ rows
and the columns listed in $J$. Then take by definition

\begin{equation}\label{Eq:DefPhi}
  \Phi_r(\eta_{i_1 j_1 k_1 l_1} \DT\otimes \eta_{i_r j_r k_r l_r}) =
  \sum_{J \subseteq \{1 \DT, n\}, \#J=r} \sdet(A_{\E}^J,B_{\E}^J).
\end{equation}
and extend $\Phi_r$ to the whole $H_{\E}$ by linearity.

\begin{theorem}\label{Th:Main}
Let $z$ be defined by \eqref{Eq:WSum} and $\Phi_r$, by
\eqref{Eq:DefPhi}. Then 
  \begin{equation*}
    \chi_z^{\Complex^n}(t) = t^n + \mu_1 t^{n-1} \DT+ \mu_{n-1}t,
  \end{equation*}
where
  \begin{equation*}
\mu_r = \Phi_r(z^{\otimes r}).
  \end{equation*}
for every $r = 1 \DT, n-1$.  
\end{theorem}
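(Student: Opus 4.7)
The plan is to reduce the statement to a trace identity for exterior powers of the action of $z$ on $\Complex^n$ and then to apply a Cauchy--Binet-style expansion based on an explicit rank-two factorization of each summand of $z$.

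First, denote by $\bar z : \Complex^n \to \Complex^n$ the image of $z$ in the defining representation. The standard expansion
\begin{equation*}
\chi_z^{\Complex^n}(t) = \det(t\I - \bar z) = \sum_{r=0}^{n} (-1)^r t^{n-r}\, \name{tr}(\wedge^r \bar z)
\end{equation*}
identifies $\mu_r = (-1)^r \name{tr}(\wedge^r \bar z)$ for every $r \ge 1$, while the constant term vanishes automatically by Remark~\ref{Rm:Sum0}. Thus it suffices to prove
\begin{equation*}
(-1)^r \name{tr}(\wedge^r \bar z) = \Phi_r(z^{\otimes r}).
\end{equation*}

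A direct computation of the action of $\eta_{ijkl}$ on the standard basis $v_1,\dots,v_n$ of $\Complex^n$ yields the symmetric rank-two factorization
\begin{equation*}
\bar\eta_{ijkl} = -(e_i-e_j)(e_k-e_l)^T - (e_k-e_l)(e_i-e_j)^T,
\end{equation*}
together with the analogous formula for $\bar\eta_{iklj}$. Consequently every summand $\bar\xi_{ijkl} = w_{ijkl}\bar\eta_{ijkl} + w_{iklj}\bar\eta_{iklj}$ is a sum of four rank-one operators whose row and column vectors are precisely the rows of the matrices $A_{\E}$ and $B_{\E}$ of \eqref{Eq:DefAB}, carrying the weights $-w_{ijkl}$ and $-w_{iklj}$. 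Collecting these pieces over all index quadruples, one writes $\bar z = UV^T$ for $n \times N$ matrices $U, V$ (with $N$ equal to four times the number of quadruples), and invokes the Cauchy--Binet identity
\begin{equation*}
\name{tr}(\wedge^r (UV^T)) = \sum_{|J|=|L|=r} \det U_{J,L}\, \det V_{J,L},
\end{equation*}
where $J \subseteq \{1,\dots,n\}$ and $L$ ranges over $r$-element subsets of the column set of $U$.

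The heart of the proof is the identification of this double sum with $\Phi_r(z^{\otimes r})$. I would stratify the sum over $L$ by: (a)~the underlying ordered $r$-tuple of quadruples $\E = (\mathsf e_1,\dots,\mathsf e_r)$; (b)~for each $s$, a choice of basis vector ($\eta_{i_s j_s k_s l_s}$ versus $\eta_{i_s k_s l_s j_s}$) in $H_{\mathsf e_s}$, which together with (a) determines the matrices $A_{\E}, B_{\E}$; and (c)~a subset $I \subseteq \{1,\dots,r\}$ recording, for each $s$, which of the two rank-one pieces of the chosen $\eta$ sends its first factor to $U$ and the other to $V$. With this encoding, $\det U_{J,L}$ and $\det V_{J,L}$ become $\det (A_{\E}^J,B_{\E}^J)_I$ and $\det (A_{\E}^J,B_{\E}^J)_{\bar I}$ up to a uniform weight factor $(-1)^r\prod_s w_s$; summing over $I$ recovers $\sdet(A_{\E}^J,B_{\E}^J)$, and the subsequent summation over (b) and (a) assembles $\Phi_r(z^{\otimes r})$. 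The overall sign $(-1)^r$ produced by the weights $-w$ cancels the $(-1)^r$ in $\mu_r = (-1)^r \name{tr}(\wedge^r \bar z)$.

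The main obstacle is the sign bookkeeping of step (c): one must verify that the ordering-dependent signs inside $\det U_{J,L}$ and $\det V_{J,L}$, combined with the symmetric structure $e_i-e_j \leftrightarrow e_k-e_l$ within each rank-two summand, reproduce exactly the coefficient $+1$ of each term of the shuffle determinant from Definition~\ref{Df:SDet}. I would address this using the discrete path integration framework of \cite{BPT}, in which each term of the Cauchy--Binet expansion is labeled by a decorated graph on $\{1,\dots,n\}$ whose weight is built from locally defined signed contributions; the required sign identities then reduce to local consistency checks at each $4$-edge, into which the symmetries recorded in assertion \ref{It:Relat} of Theorem~\ref{Th:Transp} are naturally built.
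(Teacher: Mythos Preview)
Your approach is essentially the paper's own. The paper likewise decomposes $\bar\eta_{ijkl}$ into two rank-one pieces (its Lemma~\ref{Lm:Rank2} is exactly your symmetric rank-two factorization, up to harmless sign conventions), and then invokes \cite[Corollary~2.4]{BPT} --- which packages precisely the Cauchy--Binet/trace-of-exterior-power identity you write out --- to obtain $\mu_r$ as a sum of Gram determinants $\det\bigl((\alpha_{s_p,u_p},e_{s_q,u_q})\bigr)$. It then expands each Gram determinant by Cauchy--Binet over $J\subseteq\{1,\dots,n\}$ and observes that the remaining sum over $(u_1,\dots,u_r)\in\{0,1\}^r$ is exactly the sum over $I\subseteq\{1,\dots,r\}$ defining $\sdet(A_{\E}^J,B_{\E}^J)$. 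Your stratification (a)--(c) is the same reorganization.

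Where your write-up diverges is the last paragraph. You flag the sign bookkeeping in step~(c) as the ``main obstacle'' and propose to resolve it via decorated graphs and local checks at $4$-edges from \cite{BPT}. That machinery is not needed here, and invoking it obscures rather than clarifies. Once the $e_{s,u}$ and $\alpha_{s,u}$ are defined so that the two rank-one pieces of $\bar\eta_{i_sj_sk_sl_s}$ correspond to $u=0,1$, the identification $\{(u_1,\dots,u_r)\}\leftrightarrow\{I\subseteq\{1,\dots,r\}\}$ makes $\det(\alpha_{s_p,u_p})_{p\in J}\det(e_{s_p,u_p})_{p\in J}=\det(A_{\E}^J,B_{\E}^J)_I\det(A_{\E}^J,B_{\E}^J)_{\bar I}$ on the nose, with no residual sign; the paper's last two displayed equations carry this out in two lines. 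So your proof is correct in outline and matches the paper, but the closing step should be stated as that direct identification rather than deferred to a graph-theoretic framework.
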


See Section \ref{Sec:ProofMain} for the proof.

\begin{Remark}
If $r = n-1$ then the definition of $\Phi_r$ can be simplified:

\begin{proposition}\label{Pp:Phin-1}
  If $r = n-1$ then all the summands in \eqref{Eq:DefPhi} are equal,
  so one may take
  \begin{equation*}
\Phi_{n-1} (\eta_{i_1 j_1 k_1 l_1} \DT\otimes \eta_{i_{n-1} j_{n-1} k_{n-1} l_{n-1}}) =
  n \sdet(A_{\E}^J,B_{\E}^J).
  \end{equation*}
where $J$ is any subset of $\{1 \DT, n\}$ of cardinality $(n-1)$,
e.g.\ $J = \{1 \DT, n-1\}$.
\end{proposition}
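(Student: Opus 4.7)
The plan is to exploit a simple structural feature of the matrices in question: every row of $A_\E$ and every row of $B_\E$ sums to zero, since each row contains exactly one $+1$, one $-1$, and otherwise zeros. Consequently, the all-ones column vector $\mathbf{1} = (1,\dots,1)^T$ lies in the right kernel of $A_\E$, of $B_\E$, and therefore of every row-shuffle
\[
M_I := (A_\E, B_\E)_I \in \name{Mat}_{r \times n}(\Complex), \qquad I \subseteq \{1,\dots,r\}.
\]

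For $r = n-1$ there are exactly $n$ subsets $J \subseteq \{1,\dots,n\}$ with $\#J = r$, namely the complements of singletons: $J_a := \{1,\dots,n\} \setminus \{a\}$ for $a = 1,\dots,n$. The key linear-algebra fact I would invoke is standard: for any $(n-1) \times n$ matrix $M$ whose kernel contains $\mathbf{1}$, the maximal minors satisfy
\[
\det M^{J_a} = (-1)^{a-1}\, c(M)
\]
for some scalar $c(M)$ independent of $a$. This follows from Cramer's rule, since the vector whose $a$-th entry is $(-1)^{a-1} \det M^{J_a}$ spans the (one-dimensional) kernel of $M$ and hence must be proportional to $\mathbf{1}$.

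Applying this fact to both $M_I$ and $M_{\bar I}$ gives $\det M_I^{J_a} = (-1)^{a-1} c(M_I)$ and $\det M_{\bar I}^{J_a} = (-1)^{a-1} c(M_{\bar I})$. The sign $(-1)^{a-1}$ then squares to $+1$ in each summand of the shuffle determinant, so
\[
\sdet(A_\E^{J_a}, B_\E^{J_a}) = \sum_{I} c(M_I)\, c(M_{\bar I}),
\]
which is manifestly independent of $a$. Summing over the $n$ subsets $J_a$ yields the stated formula. There is no real obstacle here: the entire argument reduces to observing that $\mathbf{1}$ is in the common kernel and that the two sign factors pair up, the rest being the Cramer's-rule identification of maximal minors of a corank-one matrix.
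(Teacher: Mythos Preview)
Your argument is correct and follows essentially the same line as the paper's: both rest on the observation that every row of $A_\E$ and $B_\E$ sums to zero, so the columns of each row-shuffle $M_I=(A_\E,B_\E)_I$ sum to zero, whence deleting column $a$ multiplies the maximal minor by a sign $(-1)^{a-1}$ that squares away in the product defining $\sdet$. The paper obtains this sign relation by an explicit column replacement (showing $\det(A_{k+1},B_{k+1})_I=-\det(A_k,B_k)_I$ via $\xi_k=-\sum_{i\ne k}\xi_i$), while you package it as the cofactor/Cramer identity; the only nit is that your ``one-dimensional kernel'' phrasing tacitly assumes $\operatorname{rank}M_I=n-1$, but in the degenerate case all maximal minors vanish and the claim is trivial.
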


The proof of the proposition is also in Section \ref{Sec:ProofMain}.
\end{Remark}

\section{Shuffle determinant}

Here are basic properties of the shuffle determinant of Definition
\ref{Df:SDet}:

 \begin{theorem}\label{Th:SDetBasic}
 \begin{enumerate}
\item\label{It:Terms} $\sdet(A,B)$ is a polynomial of variables
  $a_{ij}$ and $b_{ij}$, $1 \le i, j \le n$, with integer coefficients,
  bihomogeneous of degree $n$ (thus, its total degree is $2n$).

\item\label{It:Symm} $\sdet(B,A) = \sdet(A,B)$. 

\item\label{It:Coeff} Let $X \bydef \name{diag}(x_1 \DT, x_n)$ be a
  diagonal matrix with $x_1 \DT, x_n$ as diagonal entries. Then
  $\sdet(A,B) = [x_1 \dots x_n:\det(A + BX)^2]$ (that is, $\sdet(A,B)$
  is equal to the coefficient at the monomial $x_1 \dots x_n$ in the
  polynomial $\det(A + BX)^2$).

\item\label{It:LMult} $\sdet(CA, CB) = \sdet(A,B) \det C^2$ for any $n 
\times n$-matrix $C$. In particular, if $B$ is invertible then $\sdet(A,B) = 
\sdet(B^{-1}A, \I) \det B^2 $.

\item\label{It:E} $\sdet(A,\I) = (-1)^n \sum_{\sigma \in S_n}
  (-2)^{\nu(\sigma)} a_{1\sigma(1)} \dots a_{n\sigma(n)}$ where
  $\nu(\sigma)$ the number of independent cycles in $\sigma$.
 \end{enumerate}
 \end{theorem}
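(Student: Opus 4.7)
Parts (1) and (2) follow immediately from the definition. For (1), the Leibniz formula shows that each $\det((A,B)_I)$ is an integer polynomial, homogeneous of degree $|I|$ in the entries of $A$ and of degree $n-|I|$ in those of $B$; hence $\det((A,B)_I)\det((A,B)_{\bar I})$ is bihomogeneous of bidegree $(n,n)$ with integer coefficients, and summation over $I$ preserves these properties. For (2), note that $(B,A)_I = (A,B)_{\bar I}$, so the substitution $I \leftrightarrow \bar I$ identifies the defining sum for $\sdet(B,A)$ with that for $\sdet(A,B)$.

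Part (3) is the pivot. I would expand the determinant by multilinearity in the rows of $A + XB$: since the $i$-th row of this matrix is the $i$-th row of $A$ plus $x_i$ times the $i$-th row of $B$,
\begin{equation*}
  \det(A+XB) = \sum_{I \subseteq \{1 \DT, n\}} \det((A,B)_I) \prod_{i \notin I} x_i.
\end{equation*}
Squaring this polynomial and extracting the coefficient of $x_1 \dots x_n$ forces the two index sets in the resulting double sum to be complementary, so the coefficient equals $\sum_I \det((A,B)_I)\det((A,B)_{\bar I}) = \sdet(A,B)$. Part (4) is then a one-line corollary: from the factorisation $(A + XB)\,C = AC + X(BC)$ one gets $\sdet(AC, BC) = \det(C)^2\, \sdet(A,B)$, and the stated particular case is recovered by setting $C = B^{-1}$.

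For (5), observe that the rows $i \in \bar I$ of $(A,\I)_I$ are the standard basis vectors $e_i$; expanding $\det((A,\I)_I)$ along those rows collapses it to the principal minor of $A$ on the row and column set $I$, and symmetrically $\det((A,\I)_{\bar I})$ collapses to the principal minor on $\bar I$. Applying the Leibniz expansion inside each factor,
\begin{equation*}
  \sdet(A,\I) = \sum_{I \subseteq \{1 \DT, n\}} \det A_{I,I}\,\det A_{\bar I, \bar I} = \sum_{\sigma \in S_n} c_\sigma\, \sgn(\sigma) \prod_i a_{i\sigma(i)},
\end{equation*}
where $c_\sigma$ counts subsets $I$ such that both $I$ and $\bar I$ are $\sigma$-invariant, i.e.\ $I$ is a union of cycles of $\sigma$. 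Each of the $\nu(\sigma)$ cycles can be independently included or omitted, so $c_\sigma = 2^{\nu(\sigma)}$. Combining this with $\sgn(\sigma) = (-1)^{n - \nu(\sigma)}$ yields the claimed $(-1)^n (-2)^{\nu(\sigma)}$. The only step requiring a moment's thought is this cycle count; the rest is routine multilinearity and bookkeeping of the row/column convention in (3).
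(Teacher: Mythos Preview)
Your argument mirrors the paper's: (1)--(2) are immediate, (3) comes from a multilinear expansion of the determinant, (4) is read off from (3), and (5) reduces to the observation that the subsets $I$ contributing to a given $\sigma$-term are exactly the $\sigma$-invariant ones, i.e.\ unions of cycles, combined with $\sgn(\sigma)=(-1)^{n-\nu(\sigma)}$.

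The one substantive difference is a row/column convention. In (3) you expand $\det(A+XB)$ by multilinearity in \emph{rows}, whereas the paper expands $\det(A+BX)$ by multilinearity in \emph{columns}; correspondingly, in (4) you obtain $\sdet(AC,BC)=(\det C)^2\sdet(A,B)$ (right multiplication) rather than the printed $\sdet(CA,CB)=(\det C)^2\sdet(A,B)$ (left multiplication). Since Definition~\ref{Df:SDet} builds $(A,B)_I$ by selecting \emph{rows} ($u_{ij}=a_{ij}$ iff $i\in I$), your $XB$/right-multiplication version is the one that literally matches the definition; the paper's proof tacitly identifies a column-shuffle with the notation $(A,B)_I$ when it passes from the column expansion back to $\det(A,B)_I$. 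So your deviation is not a gap but a silent repair of a row/column inconsistency between the definition and the stated formulas for (3)--(4).
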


 \begin{proof}
Assertions \ref{It:Terms} and \ref{It:Symm} are obvious from  
Definition \ref{Df:SDet}.

Assertion \ref{It:Coeff}: denote by $a_1, \DT, a_n$ and $b_1, \DT, b_n$ 
columns of the matrices $A$ and $B$, respectively; we will be writing 
$\det(a_1 \DT, a_n)$ instead of $\det A$, and similarly for other 
matrices. The determinant of a matrix is a multilinear function of its 
columns, so one has 
 \begin{align*}
&[x_1 \dots x_n : \det(A+BX)^2] = [x_1 \dots x_n : \det(a_1+x_1b_1 \DT, 
a_n+x_nb_n)^2] \\
&= \sum_{I \subseteq \{1 \DT, n\}} [x_I: \det(a_1+x_1b_1 \DT, a_n+x_nb_n)] 
[x_{\bar I}: \det(a_1+x_1b_1 \DT, a_n+x_nb_n)] \\
&\text{(where $x_I \bydef \prod_{i \in I} x_i$)}\\
&\hphantom{[x_1 \dots x_n : \det(A+BX)^2]} = \sum_{I \subseteq \{1
  \DT, n\}} \det(w_1 \DT, w_n) \det(w'_1 \DT, w'_n) \\
&\text{(where $w_i = b_i$, $w'_i = a_i$ if $i\in I$ and vice versa if $i 
\notin I$)}\\
&\hphantom{[x_1 \dots x_n : \det(A+BX)^2]} = \sum_{I \subseteq \{1
  \DT, n\}} \det(A,B)_I \det(A,B)_{\bar I} = \sdet(A,B).
 \end{align*}

Assertion \ref{It:LMult} follows from \ref{It:Coeff}: $\det (CA +
CBX)^2 = \det (A + BX)^2 \det C^2$. The matrix $C$ does not depend on
$x_1 \DT, x_n$, so the same equality takes place for coefficients at
$x_1 \dots x_n$.

To prove assertion \ref{It:E} note that $\det (A,\I)_I$ is the
diagonal minor of the matrix $A$ comprising the rows and the columns
with the numbers in $I$. Hence,
 \begin{equation*}
\det (A,\I)_I = \sum_{\substack{\sigma \in S_n\\ \sigma(j) = j\, \forall j 
\in \bar I}} \sgn(\sigma) \prod_{i \in I} a_{i \sigma(i)},
 \end{equation*}
where $\sgn\sigma = 1$ or $-1$ depending on the parity of $\sigma$. 
Therefore 
 \begin{equation*}
\det (A,\I)_I \det (A,\I)_{\bar I} = \sum_{\substack{\sigma \in S_n\\
\sigma(i) \in I\, \forall i \in I\\
\sigma(i) \in \bar I\, \forall i \in \bar I}}
\sgn(\sigma) a_{1\sigma(1)} \dots a_{n\sigma(n)}.
 \end{equation*}
Summation over $I \subseteq \{1 \DT, n\}$ gives
 \begin{align*}
\sdet(A,\I) &= \sum_I \det (A,\I)_I \det (A,\I)_{\bar I} \\
&= \sum_{\sigma \in S_n} \#\{I \subseteq \{1 \DT, n\} \mid \sigma(i) \in 
I\, \forall i \in I\} \sgn(\sigma) a_{1\sigma(1)} \dots a_{n\sigma(n)}.
 \end{align*}
The subset $I$ invariant with respect to $\sigma$ (that is, such that
$\sigma(i) \in I$ for all $i \in I$) is a union of several independent
cycles of $\sigma$; so the number of such subsets is
$2^{\nu(\sigma)}$. On the other hand, $\sgn(\sigma) =
(-1)^{n+\nu(\sigma)}$, which finishes the proof.
 \end{proof}

Give now a more detailed description of $\sdet(A,B)$ as a polynomial
of $a_{ij}$ and $b_{ij}$. By assertion \ref{It:Terms} of Theorem
\ref{Th:SDetBasic} any term of the polynomial looks like ${c \cdot
  a_{i_1 j_1} \dots a_{i_n j_n} b_{k_1 l_1} \dots b_{k_n l_n}}$ where
$c \in \Integer$. Denote by $\G \bydef \G(i_1, j_1 \DT, k_n, l_n)$ a
directed graph with the vertices $1 \DT, n$ and the $2n$ edges $(i_1
j_1) \DT, (i_n j_n),\linebreak[1] (k_1 l_1) \DT, (k_n l_n)$.

 \begin{theorem}
 \begin{enumerate}
\item\label{It:2in2out} Every vertex of the graph $\G$ is incident to
  exactly four edges; the vertex is initial for two of them and is
  terminal for the remaining two.

\item\label{It:ABSymm} The coefficient $c$ at the monomial depends on
  the graph $\G$ only and is equal to $\pm 2^{m(\G)}$ where $m(\G) \in
  \Integer_{>0}$ is the number of connected components of an auxiliary
  graph $\Gamma'$ determined by $\G$.
 \end{enumerate}
 \end{theorem}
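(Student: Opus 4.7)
The plan is to expand the shuffle determinant from Definition \ref{Df:SDet} into an explicit triple sum. Writing each ordinary determinant as a sum over permutations, the $I$-summand of $\sdet(A,B)$ contributes
\[
\sgn(\sigma)\sgn(\tau) \prod_{i \in I} a_{i\sigma(i)} b_{i\tau(i)} \prod_{i \in \bar I} b_{i\sigma(i)} a_{i\tau(i)}
\]
for $\sigma,\tau \in S_n$. From this formula assertion \ref{It:2in2out} is immediate: regardless of $I$, each vertex $i$ is the source of exactly one $a$-edge and one $b$-edge, while each vertex $j$ is the target of one edge coming from $\sigma^{-1}(j)$ and one coming from $\tau^{-1}(j)$ (whose $a$/$b$ labels depend on whether these preimages lie in $I$).

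For assertion \ref{It:ABSymm}, fix a graph $\G$ and read off two functions $\alpha,\beta\colon\{1\DT,n\}\to\{1\DT,n\}$ giving the $a$- and $b$-out-neighbor of each vertex. A triple $(I,\sigma,\tau)$ produces this $\G$ precisely when, for every $i$: either $i \in I$ with $\sigma(i)=\alpha(i),\tau(i)=\beta(i)$, or $i \notin I$ with $\sigma(i)=\beta(i),\tau(i)=\alpha(i)$. Thus $I$ alone determines $\sigma$ and $\tau$, and the only surviving condition is that the resulting $\sigma$ (equivalently, $\tau$) be a bijection.

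Next I would introduce the auxiliary graph $\Gamma'$ on the vertex set $\{1\DT,n\}$: for each vertex $j$ with in-neighbors $s,t$ in $\G$, draw a ``same-set'' edge $\{s,t\}$ if the two in-edges at $j$ carry different labels ($a$ and $b$), and an ``opposite-set'' edge if they carry the same label ($aa$ or $bb$). A local case analysis at each $j$ shows that $\sigma$ is a bijection iff the indicator function $\mathbf 1_I$ is a signed $2$-coloring of $\Gamma'$ consistent with these edge labels. Hence the number of valid $I$ is either $0$ (if constraints are inconsistent) or $2^{m(\Gamma')}$, where $m(\Gamma')$ is the number of connected components of $\Gamma'$; self-loops and multi-edges of $\Gamma'$ are handled by the obvious refinements of consistency.

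The technical heart of the argument, and the step I expect to be the main obstacle, is showing that $\sgn(\sigma)\sgn(\tau)$ is constant across all valid $I$. Flipping $I$ on one component $C$ of $\Gamma'$ simply swaps $\sigma$ and $\tau$ on $C$; I would check that $\sigma_1(C)=\tau_1(C)$ equals the set $D$ of vertices whose two in-neighbors both lie in $C$ (using that no edge of $\Gamma'$ crosses out of $C$), and that the permutations $\pi := \sigma_2\sigma_1^{-1}$ and $\pi' := \tau_2\tau_1^{-1}$ are both supported on $D$ and mutually inverse. Consequently $\sgn(\sigma_2)\sgn(\tau_2) = \sgn(\sigma_1)\sgn(\tau_1)\sgn(\pi)\sgn(\pi^{-1}) = \sgn(\sigma_1)\sgn(\tau_1)$, so by induction on flipped components the sign is invariant. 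Combined with the count $2^{m(\Gamma')}$, this yields the asserted coefficient $\pm 2^{m(\Gamma')}$.
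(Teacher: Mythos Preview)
Your argument is essentially sound, but there are two genuine gaps and one structural difference from the paper worth noting.

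\textbf{The ``zero'' case.} You write that the number of admissible $I$ is ``either $0$ (if constraints are inconsistent) or $2^{m(\Gamma')}$'', and then simply conclude with $\pm 2^{m(\Gamma')}$. The theorem, however, asserts the coefficient is exactly $\pm 2^{m(\G)}$ with $m(\G)\ge 1$, so you must rule out inconsistency. This is not automatic from your setup: your constraint system is a collection of XOR-equations on $\mathbf 1_I$, one per vertex $j$, and consistency means every cycle in $\Gamma'$ carries an even number of ``opposite-set'' edges. A clean way to see this: walking around a cycle $i_1,\dots,i_k$ of your $\Gamma'$, each $i_r$ contributes its two out-edges $(i_r,\alpha(i_r))_a$ and $(i_r,\beta(i_r))_b$ to the two incident $\Gamma'$-edges $j_{r-1},j_r$; setting $d_r=0$ or $1$ according to whether $j_r=\alpha(i_r)$ or $\beta(i_r)$, one checks $c_r=d_r\oplus d_{r+1}$, whence $\sum_r c_r\equiv 0\pmod 2$. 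You should include this (and the observation that ``opposite-set'' self-loops never occur, since two in-edges at $j$ from the same source $s$ are necessarily one $a$ and one $b$).

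\textbf{Dependence on $\G$ only.} You read off $\alpha,\beta$ from the monomial, not from $\G$; different monomials with the same $\G$ assign the $a/b$ labels differently, which changes the same-set/opposite-set marks on your $\Gamma'$. The theorem asserts the coefficient depends only on $\G$, and you do not address this. The fix is short: swapping $\alpha(i_0)\leftrightarrow\beta(i_0)$ corresponds to the involution $I\mapsto I\symmdiff\{i_0\}$ on admissible sets, under which the induced $(\sigma,\tau)$ are unchanged; hence both the count and the sign are label-independent. The paper makes exactly this move explicit.

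\textbf{Different auxiliary graph.} Your $\Gamma'$ lives on the vertex set $\{1,\dots,n\}$ with one (labelled) edge per target $j$; the paper's $\Gamma'$ lives on the $2n$ edges of $\G$, joining two edges whenever they share a head or a tail. The paper's graph is $2$-regular, hence a union of cycles in which ``same-initial'' and ``same-terminal'' edges alternate, so all cycles are even and consistency is immediate --- this is what your construction loses. Your $\Gamma'$ is precisely the paper's after contracting all ``same-initial'' edges, so the component counts agree; but the evenness that makes the paper's argument effortless has to be recovered by the telescoping computation above. Your sign-invariance argument (via $\pi$ and $\pi^{-1}$ supported on $D=\sigma_1(C)=\tau_1(C)$) is correct and in fact cleaner than the paper's somewhat informal ``product of transpositions'' step.
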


The proof below contains the exact contruction of the graph $\Gamma'$. 

 \begin{proof}
Assertion \ref{It:2in2out}: take some $I \subseteq \{1 \DT, n\}$. If
$i \in I$ then the elements $a_{ij}$ (for all $j$) are in the $i$-th
column of $(A,B)_I$; if $i \notin I$, then they are in the $i$-th
column of $(A,B)_{\bar I}$. Thus, exactly one of $a_{i_1 j_1} \DT,
a_{i_n j_n}$ is $a_{ij}$ for some $j$, which implies $\{i_1 \DT, i_n\}
= \{1 \DT, n\}$; similarly, $\{k_1 \DT, k_n\} = \{1 \DT, n\}$. So,
every vertex of $\G$ is an initial vertex of two edges. At the same
time, for every $j \in \{1 \DT, n\}$ every monomial of $\det (A,B)_I$
contains exactly one letter $x_{ij}$ where $x = a$ or $b$, for some $i
\in \{1 \DT, n\}$; the same is true for $\det (A,B)_{\bar I}$ ---
hence, every vertex of $\G$ is a terminal vertex for two edges.

Assertion \ref{It:ABSymm}: note first that the monomial is {\em not}
determined uniquely by the graph $\G$ --- one cannot tell which edges
correspond to $a_{ij}$ and which to $b_{kl}$. Prove that this
ambiguity does not influence the coefficient.

By assertion \ref{It:2in2out}, every mononial in $\sdet(A,B)$ is equal
to $x = a_{1j_1}a_{2j_2} \dots a_{nj_n} \linebreak[1] b_{1l_1}b_{2l_2}
\dots b_{nl_n}$ for some $j_1 \DT, j_n, l_1 \DT, l_n$. It is enough to
show that the coefficient at $x$ is the same as the coefficient at the
monomial $x' = b_{1j_1}a_{2j_2} \dots a_{nj_n}a_{1l_1}b_{2l_2} \dots
b_{nl_n}$. Note that for every $I \subseteq \{1 \DT, n\}$ the
contribution of the term $\det (A,B)_I \det (A,B)_{\bar I}$ to the
coefficient at $x$ is equal to the contribution of $\det (A,B)_{I'}
\det (A,B)_{\bar I'}$ to the coefficient at $x'$, where $I' \bydef I
\symmdiff \{1\}$. But $I \mapsto I \symmdiff \{1\}$ is an invertible
operation (indeed, an involution) on the set of subsets of ${\{1 \DT,
  n\}}$, so the coefficients at $x$ and at $x'$ are equal.

To obtain a formula for the coefficient take a monomial $x$ as above
and paint every edge $(ij)$ of the graph $\G$ blue if the
corresponding letter comes from $I$ (that is, $i \in I$ and the letter
is $b_{ij}$ or $i \in \bar I$ and the letter is $a_{ij}$) and red if
it comes from $\bar I$. A blue-red painting of the edges of $\G$
corresponds to a subset $I \subseteq \{1 \DT, n\}$ if each vertex is
initial and terminal for exactly one red and one blue edge; if $I$
exists, then it is obviously unique. The subgraphs of $\G$ formed by
red and blue edges are graphs of some permutations; call them
$\sigma_r$ and $\sigma_b$, respectively. The contribution of the term
$\det (A,B)_I \det (A,B)_{\bar I}$ into the coefficient is equal to
the product of parities of $\sigma_r$ and $\sigma_b$.

Consider a graph $\Gamma'$ whose vertices are edges of $\G$; two
vertices are connected by an edge if the corresponding edges of $\G$
share the same initial vertex or the same terminal vertex. By
assertion \ref{It:2in2out}, every vertex of $\Gamma'$ is incident to
exactly two edges --- hence, $\Gamma'$ is a union of nonintersecting
cycles. Red and blue vertices alternate in the cycle; therefore, each
cycle in $\Gamma'$ has even length.

The graph $\G = \G(i_1, j_1 \DT, k_n, l_n)$ determines $\Gamma'$. To
fix a subset $I$ one should paint vertices of $\Gamma'$ so that the
colors alternate in every cycle. For each cycle there are obviously
two such paintings possible; thus, the number of subsets $I$ for the
graph $G$ is $2^m$ where $m$ is the number of cycles (connected
components) in $\Gamma'$.

Let now $I_1, I_2 \subseteq \{1 \DT, n\}$ be two sets making nonzero 
contributions to the coefficient at the monomial $x$ and such that the 
corresponding colorings differ on one cycle of the graph $\Gamma'$ only; let 
this cycle be $e_1 \dots e_{2s}$. Then permutations $(\sigma_1)_r$ and 
$(\sigma_2)_r$ differ by a product of transpositions $(e_1 e_2) (e_3 e_4) 
\dots (e_{2s-1} e_{2s})$, and their parities differ by $(-1)^s$.
The same is true for permutations $(\sigma_1)_b$ and $(\sigma_2)_b$, so the 
terms $\det (A,B)_{I_1} \det (A,B)_{\bar I_1}$ and $\det (A,B)_{I_2} \det 
(A,B)_{\bar I_2}$ make equal contributions of $\pm1$ into the coefficient. 
This finishes the proof.
 \end{proof}

\section{Proof of Theorem \ref{Th:Main} and final remarks}\label{Sec:ProofMain}

\subsection{Proofs}

\begin{proof}[of Proposition \ref{Pp:Phin-1}]
  The set $J \subset \{1 \DT, n\}$ of cardinality $(n-1)$ is $\{1 \DT,
  n\} \setminus \{k\}$ for some $k$; denote $A^J \bydef A_k$ and $B^J
  \bydef B_k$ for short. By definition, $\sdet(A_k,B_k) = \sum_{I
    \subseteq \{1 \DT, n\}} \det (A_k,B_k)_I \det (A_k,B_k)_{\bar
    I}$. Denote by $\xi_1 \DT, \xi_n$ the columns of the $(n-1) \times
  n$-matrix $(A,B)_I$; one has $\xi_1 \DT+ \xi_n = 0$. The matrix
  $(A_{k+1},B_{k+1})_I$ is obtained from $(A_k,B_k)_I$ by replacement
  of the column $\xi_{k+1}$ with $-\xi_1 \DT- \xi_{k-1} - \xi_{k+1}
  \DT- \xi_n$. Since all the rows $\xi_i$ except $\xi_{k+1}$ are
  present in $(A_{k+1},B_{k+1})_I$, the determinant of
  $(A_{k+1},B_{k+1})_I$ is the same as if the replacement row were
  still $-\xi_{k+1}$. Thus, $\det (A_{k+1},B_{k+1})_I = -\det
  (A_k,B_k)_I$. The subset $I$ is arbitrary, so $\det
  (A_{k+1},B_{k+1})_{\bar I} = -\det (A_k,B_k)_{\bar I}$, too, and
  therefore $\sdet(A_{k+1},B_{k+1}) = \sdet(A_k,B_k)$, proving the
  proposition.
\end{proof}

\begin{proof}[of Theorem \ref{Th:Main}]

Let $v, \alpha \in \Complex^n$ be nonzero vectors. Denote by
$M[\alpha,v]: \Complex^n \to \Complex^n$ a rank $1$ linear operator
defined as $M[\alpha,v](u) = (\alpha,u) v$, $u \in \Complex^n$, where
$(\cdot,\cdot)$ is the standard ($\Complex$-valued) scalar product in
$\Complex^n$ .

 \begin{lemma}\label{Lm:Rank2}
Let $v_1 \DT, v_n$ be the standard basis in $\Complex^n$ (orthonormal
with respect to $(\cdot,\cdot)$). Then the Lie element $\eta_{ijkl} =
(ijkl)+(ilkj)-(ijlk)-(iklj)$ acts in the permutation representation
$\Complex^n$ as $M[v_i-v_j,v_l-v_k] + M[v_l-v_k,v_i-v_j]$.
 \end{lemma}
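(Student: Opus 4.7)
The plan is a direct computation on the standard basis, taking advantage of the fact that each of the four cycles appearing in $\eta_{ijkl}$ is supported on the four indices $i,j,k,l$.

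First I would observe that each permutation $(ijkl), (ilkj), (ijlk), (iklj)$ fixes $v_p$ whenever $p \notin \{i,j,k,l\}$; the same clearly holds for the right-hand side, since the vectors $v_i-v_j$ and $v_l-v_k$ are orthogonal to every such $v_p$ under the standard ($\Complex$-valued) product. Thus both sides annihilate $v_p$ for $p \notin \{i,j,k,l\}$, and it is enough to test the equality on the four vectors $v_i, v_j, v_k, v_l$.

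Next, following the convention $\sigma \cdot v_p = v_{\sigma(p)}$ implicit in Lemma \ref{Lm:KirchDiff}, I would tabulate the image of $v_i, v_j, v_k, v_l$ under each of the four cycles. For instance, $(ijkl)$ sends $v_i \mapsto v_j$, $v_j \mapsto v_k$, $v_k \mapsto v_l$, $v_l \mapsto v_i$, and similarly for the other three. Forming the signed sum $\eta_{ijkl} = (ijkl)+(ilkj)-(ijlk)-(iklj)$ collapses the tables to
\begin{align*}
  \eta_{ijkl}(v_i) &= v_l-v_k, & \eta_{ijkl}(v_j) &= -(v_l-v_k),\\
  \eta_{ijkl}(v_k) &= -(v_i-v_j), & \eta_{ijkl}(v_l) &= v_i-v_j.
\end{align*}

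Finally I would evaluate the right-hand side on the same four basis vectors. Writing $\alpha = v_i-v_j$ and $w = v_l-v_k$, one has $(\alpha,v_i)=1$, $(\alpha,v_j)=-1$, $(\alpha,v_k)=(\alpha,v_l)=0$ and symmetrically $(w,v_l)=1$, $(w,v_k)=-1$, $(w,v_i)=(w,v_j)=0$. Plugging these into $M[\alpha,w](u)+M[w,\alpha](u)=(\alpha,u)w+(w,u)\alpha$ reproduces exactly the four values above, completing the verification. There is no genuine obstacle: the only thing to watch is the orientation convention for the action of $S_n$ on the coordinate basis, which must be taken consistently with the proof of Lemma \ref{Lm:KirchDiff}.
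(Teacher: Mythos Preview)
Your argument is correct and is precisely the ``immediate check'' the paper refers to: the paper offers no further detail, and your basis-by-basis verification with the convention $\sigma\cdot v_p = v_{\sigma(p)}$ is exactly what is intended. The computations of $\eta_{ijkl}(v_i),\dots,\eta_{ijkl}(v_l)$ and their match with $M[v_i-v_j,v_l-v_k]+M[v_l-v_k,v_i-v_j]$ are all accurate.
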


The proof is an immediate check.

Lemma \ref{Lm:Rank2} allows to derive Theorem \ref{Th:Main} from
\cite[Corollary 2.4]{BPT}. To keep up with the notation of \cite{BPT},
let's take by definition
\begin{equation}\label{Eq:DefEAlpha}
  \begin{aligned}
    e_{s,0} &= v_{i_s}-v_{j_s},\\
    e_{s,1} &= v_{k_s}-v_{l_s},\\
    \alpha_{s,0} &= v_{k_s}-v_{l_s},\\
    \alpha_{s,1} &= v_{i_s}-v_{j_s},    
  \end{aligned}
\end{equation}
so that $z = \sum_{s=1}^m \sum_{u \in \{0,1\}}
M[e_{s,u},\alpha_{s,u}]$. Now Corollary 2.4 of \cite{BPT} implies that
\begin{align*}
  \mu_r &= \sum_{s_1 \DT, s_r = 1}^m w_{i_{s_1}j_{s_1}k_{s_1}l_{s_1}}
  \dots w_{i_{s_r}j_{s_r}k_{s_r}l_{s_r}} \sum_{u_1 \DT, u_r = 0}^1
  \det \bigl((\alpha_{s_p,u_p}, e_{s_q,u_q})\bigr)_{p,q=1}^r \\
  &= \sum_{s_1 \DT, s_r = 1}^m w_{i_{s_1}j_{s_1}k_{s_1}l_{s_1}} \dots
  w_{i_{s_r}j_{s_r}k_{s_r}l_{s_r}} \\
  &\hphantom{= \sum_{s_1 \DT, s_r = 1}^m}\times
  \sum_{u_1 \DT, u_r = 0}^1 \sum_{\substack{J
    \subseteq \{1 \DT, n\}\\ \#J = r}} \det (\alpha_{s_p,u_p})_{p \in
    J} \det (e_{s_p,u_p})_{p \in J};
\end{align*}
in the last equation by $\det(c_1 \DT, c_r)$ we mean a determinant of
a $r \times r$ matrix having vectors $c_1 \DT, c_r \in \Complex^r$ as
columns. Instead of indices $u_1 \DT, u_r \in \{0,1\}$ consider a set
$I \bydef \{i \mid u_i = 1\} \subseteq \{1 \DT, r\}$. Taking
\eqref{Eq:DefEAlpha} and \eqref{Eq:DefAB} into account one can write
\begin{align*}
  \mu_r &= \sum_{s_1 \DT, s_r = 1}^m w_{i_{s_1}j_{s_1}k_{s_1}l_{s_1}}
  \dots w_{i_{s_r}j_{s_r}k_{s_r}l_{s_r}} \sum_{\substack{J \subseteq \{1
      \DT, n\}\\ \#J = r}} \sum_{I \subseteq \{1 \DT, r\}} \det
  (A_{\E}^J)_I \det (B_{\E}^J)_{\bar I} \\
  &= \sum_{s_1 \DT, s_r = 1}^m w_{i_{s_1}j_{s_1}k_{s_1}l_{s_1}} \dots
  w_{i_{s_r}j_{s_r}k_{s_r}l_{s_r}} \sum_{\substack{J \subseteq \{1
      \DT, n\}\\ \#J = r}} \sdet(A_{\E}^J,B_{\E}^J).
\end{align*}
Theorem \ref{Th:Main} is proved.
\end{proof}

\subsection{Final remarks and further research}

\subsubsection{Geometry of $4$-graphs} For every $1 \le i<j<k<l \le n$ consider
two different tetrahedra with the vertices $i,j,k,l$, and call them
$4$-edges $T_1$ and $T_2$. A $4$-graph is defined a union of several
$4$-edges glued by vertices.

The main result of the paper, Theorem \ref{Th:Main}, expresses a
coefficient $\mu_r$ of the characteristic polynomial as a homogeneous
(degree $r$) polynomial of the coefficients $w_{ijkl}$. If one puts a
coefficient $w_{ijkl}$ on the $4$-edge $T_1$ and $w_{iklj}$, on the
$4$-edge $T_2$, then this polynomial becomes a sum over the set of all
$4$-graphs with $r$ edges. The summand corresponding to a graph $\E$
is the product of weights of all the edges of $\E$ times an integer
coefficient $c_{\E}$ described in the theorem (sum of shuffle
determinants of minors of the matrices $A_{\E}$ and $B_{\E}$).

Matrix-tree theorems \ref{Th:MTT} and \ref{Th:PfT} have similar
structure with ordinary graphs and $3$-graphs in place of the
$4$-graphs. In Theorem \ref{Th:MTT} the coefficient $c_{\E}$ is equal
to $n$ if $\E$ is a tree and is zero otherwise. In Theorem
\ref{Th:PfT} one has $\mu_{n-1} = \bigl(\left.\name{Pf}
y\right|_V\bigr)^2$, so $c_{\E} = n^2 \sum \delta(G_1) \delta(G_2)$
where the sum is taken over all representations $\E = G_1 \sqcup G_2$
of $\E$ as a union of two $3$-trees. The formula for $c_{\E}$ in
Theorem \ref{Th:Main} is explicit, but unlike theorems \ref{Th:MTT}
and \ref{Th:PfT} it is not related to the geometry of the underlying
$4$-graph. Finding such a relation would be an interesting
combinatorial problem to solve.

\subsubsection{Structure of $\Lie_n$ as a Lie algebra and as a
  representation of $S_n$} For any group $G$ and its representation
$V$ the elements $x \in \mathcal L(V) \subset k[G]$ act in the
representation $V$. This action may have a kernel; denote it $K(V)
\subset L(V)$ (and $K_n \subset L_n$ if $V$ is the permutation
representation of $S_n$).

\begin{conjecture}
$\dim \mathcal L_n/K_n = (n-1)!$. The repeated commutators of
  Kirchhoff differences
 \begin{equation*}
[\dots[\kappa_{1i_1},\kappa_{2i_2}],\kappa_{3i_3}],\dots],\kappa_{n-1,i_{n-1}}]
 \end{equation*}
for all $i_1 \DT, i_{n-1}$ such that $s+1 \le i_s \le n$ for all $s = 1
\DT, n-1$ form a basis in $\mathcal L_n/K_n$.
 \end{conjecture}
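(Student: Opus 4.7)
The plan is to prove both assertions together---the dimension count $\dim \mathcal L_n/K_n = (n-1)!$ and the basis property---by combining a linear-independence argument with a spanning argument. Denote the candidate basis elements by
\[
B_{i_1, \ldots, i_{n-1}} \bydef [\dots[\kappa_{1 i_1}, \kappa_{2 i_2}], \kappa_{3 i_3}], \dots], \kappa_{n-1, i_{n-1}}],
\]
with $i_s \in \{s+1, \ldots, n\}$. Membership of each $B$ in $\mathcal L_n$ is immediate from Corollary \ref{Cr:LieAlg} together with Lemma \ref{Lm:KirchDiff}, so the work is to establish linear independence modulo $K_n$ and spanning.

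For linear independence I would compute the action of each $B_{\mathbf{i}}$ in the reflection representation $V$ directly, using Lemma \ref{Lm:Rank2} (each $\kappa_{ij}$ acts as the rank-one operator $M[v_i - v_j, v_i - v_j]$), evaluate on the basis $e_k = v_k - v_n$, and try to organize the resulting values into a matrix with a triangular structure whose diagonal entries can be read off explicitly. The constraint $i_s \geq s+1$ is reminiscent of a parking-function or Pr\"ufer-style code for objects indexed by $\{1, \ldots, n-1\}$, which makes such a triangularization plausible and furnishes nonzero diagonal entries.

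For spanning I would assume Conjecture \ref{Cj:KirchGen} (that the Kirchhoff differences generate $\mathcal L_n$) and rewrite any iterated commutator of $\kappa_{ij}$'s into the specified normal form using the Jacobi identity, antisymmetry, the symmetries of $\nu_{ijk}$ and $\eta_{ijkl}$ listed in Theorem \ref{Th:Transp}, and whatever extra relations are forced inside $\mathcal L_n/K_n \subset \mathrm{End}(V)$. The standard straightening argument for Hall bases of free Lie algebras applies with modifications: the outermost bracket is brought into the form $[\cdot,\kappa_{n-1,n}]$ by repeated application of Jacobi, any discrepancy is absorbed into shorter brackets handled by induction on bracket length, and the procedure terminates because iteration length strictly decreases at each step.

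The hard part will be the spanning argument, for two reasons. First, it presupposes Conjecture \ref{Cj:KirchGen}, which is itself open. Second, $\mathcal L_n/K_n$ is not a free Lie algebra on the $\kappa_{ij}$, so Jacobi and antisymmetry alone do not suffice for rewriting; the representation imposes further relations that must be tracked. A natural workaround is to establish an \emph{a priori} upper bound $\dim(\mathcal L_n/K_n) \leq (n-1)!$ from representation-theoretic or combinatorial invariants---for instance, by exploiting the characteristic polynomial formula of Theorem \ref{Th:Main}, which encodes traces of $\mathcal L_n$-elements as sums over $4$-graphs and thereby constrains the image of $\mathcal L_n$ in $\mathrm{End}(V)$. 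Matching this upper bound against the $(n-1)!$ lower bound from linear independence would then close the argument.
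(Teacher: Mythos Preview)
The paper does not prove this statement: it is explicitly a conjecture, accompanied by the remark that it has been ``tested numerically for small $n$'' but ``is not proved at the moment.'' There is therefore no proof in the paper to compare your proposal against.

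That said, your outline runs into a hard obstruction already at the linear-independence step. By the paper's definition, $K_n$ is the kernel of the action of $\mathcal L_n$ on the permutation representation $\Complex^n$; since every Lie element annihilates the trivial summand $\mathbf 1$, this is the same as the kernel of the action on $V$. Hence $\mathcal L_n/K_n$ embeds into $\name{End}(V)$, a space of dimension $(n-1)^2$. For $n\ge 5$ one has $(n-1)^2<(n-1)!$, so $(n-1)!$ elements cannot be linearly independent in $\mathcal L_n/K_n$. Your plan of exhibiting independence by evaluating the operators on the basis $e_k=v_k-v_n$ and finding a triangular pattern therefore cannot succeed once $n\ge 5$: no such matrix can have $(n-1)!$ independent rows inside an $(n-1)^2$-dimensional target. (Incidentally, the rank-one formula $\kappa_{ij}=M[v_i-v_j,\,v_i-v_j]$ is correct but is not what Lemma~\ref{Lm:Rank2} says; that lemma concerns $\eta_{ijkl}$.)

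So either the conjecture, read literally, fails for $n\ge 5$ and the authors' ``small $n$'' means $n\le 4$, or the intended $K_n$ is a different kernel---for instance the joint kernel of the actions on all $V^{\wedge m}$, which would permit a larger quotient. Before any proof strategy can be assessed you need to resolve which reading is meant; your approach is sensitive to it. The spanning half of your plan is, as you yourself note, conditional on Conjecture~\ref{Cj:KirchGen} (itself open) and on unspecified extra relations in $\name{End}(V)$; that part is too schematic to constitute a proof in any case.
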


We tested the conjecture numerically for small $n$; yet it is not
proved at the moment.

For any $n$ consider the embedding $\iota_n: S_n \to S_{n+1}$ of $S_n$
to $S_{n+1}$ as a stabilizer of $(n+1)$; extend it by linearity to the
algebra homomorphism $\iota_n: \Complex[S_n] \to \Complex[S_{n+1}]$.

 \begin{proposition}\label{Pp:Induct}
$\iota_n(\mathcal L_n) \subset \mathcal L_{n+1}$.
 \end{proposition}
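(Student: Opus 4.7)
The plan is to compare $\mathcal{G}_m\langle \iota_n(x)\rangle$ and $\mathcal{A}_m\langle \iota_n(x)\rangle$ block-wise under the natural splitting $\mathbb{C}^{n+1} = \mathbb{C}^n \oplus \mathbb{C}v_{n+1}$ of the permutation representation. Since every $g \in \iota_n(S_n)$ fixes $v_{n+1}$, we get an $\iota_n(S_n)$-stable decomposition
\begin{equation*}
  (\mathbb{C}^{n+1})^{\wedge m} = (\mathbb{C}^n)^{\wedge m} \oplus \bigl((\mathbb{C}^n)^{\wedge(m-1)} \wedge v_{n+1}\bigr),
\end{equation*}
and it suffices to check $\mathcal{G}_m\langle\iota_n(x)\rangle = \mathcal{A}_m\langle\iota_n(x)\rangle$ on each summand for every $m = 0, 1, \dots, n+1$.

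First I would write $x = \sum_{g \in S_n} a_g g$ and, for $g \in S_n$, spell out the action of both operators explicitly. On $(\mathbb{C}^n)^{\wedge m}$ the summand $v_{n+1}$ never appears, so $\mathcal{G}_m\langle\iota_n(g)\rangle$ and $\mathcal{A}_m\langle\iota_n(g)\rangle$ restrict to the corresponding $S_n$-operators $\mathcal{G}_m\langle g\rangle$ and $\mathcal{A}_m\langle g\rangle$. On the second summand, using $g(v_{n+1}) = v_{n+1}$, a direct check gives
\begin{align*}
  \mathcal{G}_m\langle\iota_n(g)\rangle(u \wedge v_{n+1}) &= \mathcal{G}_{m-1}\langle g\rangle(u) \wedge v_{n+1},\\
  \mathcal{A}_m\langle\iota_n(g)\rangle(u \wedge v_{n+1}) &= \mathcal{A}_{m-1}\langle g\rangle(u) \wedge v_{n+1} + u \wedge v_{n+1}
\end{align*}
for every $u \in (\mathbb{C}^n)^{\wedge(m-1)}$, the extra summand $u \wedge v_{n+1}$ coming from the $p = m$ term of $\mathcal{A}_m$.

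Now I would extend by linearity. On $(\mathbb{C}^n)^{\wedge m}$ the equality $\mathcal{G}_m\langle\iota_n(x)\rangle = \mathcal{A}_m\langle\iota_n(x)\rangle$ is just the condition $\mathcal{G}_m\langle x\rangle = \mathcal{A}_m\langle x\rangle$, which holds for $m = 0, \dots, n$ because $x \in \mathcal{L}_n$ (and the summand is zero for $m = n+1$). On the second block,
\begin{equation*}
  \mathcal{A}_m\langle\iota_n(x)\rangle - \mathcal{G}_m\langle\iota_n(x)\rangle = \bigl(\mathcal{A}_{m-1}\langle x\rangle - \mathcal{G}_{m-1}\langle x\rangle\bigr) \oplus \Bigl(\sum_{g \in S_n} a_g\Bigr)\mathcal{I},
\end{equation*}
so the desired equality follows from two facts: $\mathcal{A}_{m-1}\langle x\rangle = \mathcal{G}_{m-1}\langle x\rangle$ for $m - 1 \le n$ (again because $x \in \mathcal{L}_n$), and $\sum_g a_g = 0$, which is exactly Remark \ref{Rm:Sum0}.

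There is no serious obstacle here; the only thing one has to notice is that the spurious term $(\sum_g a_g)\mathcal{I}$ arising in $\mathcal{A}_m$ from differentiating the invariant slot $v_{n+1}$ is killed precisely by the vanishing-sum condition satisfied by every Lie element. Together with the fact that the inductive jump $m \mapsto m-1$ uses values of $\mathcal{G}_k = \mathcal{A}_k$ only for $k \le n$, this covers all $m$ up to $n+1 = \dim \mathbb{C}^{n+1}$ and proves $\iota_n(x) \in \mathcal{L}_{n+1}$.
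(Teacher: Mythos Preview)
Your proof is correct and follows essentially the same approach as the paper: the paper performs the same case split on basis vectors $x_{i_1}\wedge\dots\wedge x_{i_m}$ according to whether $i_m\le n$ or $i_m=n+1$, which is exactly your direct-sum decomposition $(\mathbb{C}^{n+1})^{\wedge m}=(\mathbb{C}^n)^{\wedge m}\oplus\bigl((\mathbb{C}^n)^{\wedge(m-1)}\wedge v_{n+1}\bigr)$. In both arguments the reduction $\mathcal G_m,\mathcal A_m\mapsto \mathcal G_{m-1},\mathcal A_{m-1}$ on the second block and the use of Remark~\ref{Rm:Sum0} to cancel the spurious $\bigl(\sum_g a_g\bigr)\mathcal I$ term are the key steps.
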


 \begin{proof}
Let $u = \sum_{\sigma \in S_n} a_\sigma \sigma \in \mathcal L_n$;
consider the action of $\Grp_m\lr{\iota_n(u)}$ and
$\Alg_m\lr{\iota_n(u)}$ on $x \bydef x_{i_1} \wedge \dots \wedge
x_{i_m}$ where $1 \le i_1 \DT< i_m \le n+1$. If $i_m \le n$ then
$\Grp_m\lr{\iota_n(u)}(x) = \Grp_m\lr{u}(x) = \Alg_m\lr{u}(x) =
\Alg_m\lr{\iota_n(u)}(x)$, hence $\iota_n(u) \in \mathcal L_{n+1}$.

Let now $i_m = n+1$. Then $\Grp_m\lr{\iota_n(u)}(x) = 
\Grp_{m-1}\lr{u}(x_{i_1} \DT\wedge x_{i_{m-1}}) \wedge x_{n+1}$. On the 
other hand,
 \begin{equation*}
\Alg_m\lr{\iota_n(u)}(x) = \left(\sum_{\sigma \in S_n} a_\sigma \sum_{p=1}^n 
x_{i_1} \DT\wedge x_{\sigma(i_p)} \DT\wedge 
x_{i_{m-1}}\right) \wedge x_{n+1} + \sum_{\sigma \in S_n} a_\sigma \cdot x.
 \end{equation*}
By Remark \ref{Rm:Sum0} the last term in the equation above is
zero. Thus,
 \begin{equation*}
\Alg_m\lr{\iota_n(u)}(x) = \Alg_{m-1}\lr{u}(x_{i_1} \DT\wedge x_{i_{m-1}}) 
\wedge x_{n+1},
 \end{equation*}
and therefore $\Grp_m\lr{\iota_n(u)}(x) = \Alg_m\lr{\iota_n(u)}(x)$,
which means $\iota_n(u) \in \mathcal L_{n+1}$.
 \end{proof}

Proposition \ref{Pp:Induct} allows to consider the inductive limit
$\Lie_\infty$ of $\Lie_2 \stackrel{\iota_2}{\hookrightarrow} \Lie_3
\stackrel{\iota_3}{\hookrightarrow} \dots$. It is a representation of
the group $S_\infty$ of finitely supported permutations of $\{1, 2,
\dots\}$ and a Lie subalgebra of $\Complex[S_\infty]$ (conjecturally,
generated by the Kirchhoff differences $\kappa_{ij} = 1-(ij)$, $1 \le i <
j$). Very few is known yet about both structures on $\Lie_\infty$.

\subsubsection{Lie elements and embedded graphs} Let $(i_1 j_1) \DT,
(i_m j_m)$ be a sequence of transpositions in $S_n$ or, which is the
same, the numbered edges of a graph $\Gamma$ with the vertices $1 \DT,
n$. There exists (see \cite{BurmanZvonkine,BurmanFesler}) a uniquely
defined embedding of $\Gamma$ into a sphere $M$ with handles and holes
sending the vertices of $\Gamma$ to the boundary of $M$; distribution
of the vertices among components of the boundary coincides with the
cyclic structure of the permutation $\sigma = (i_1 j_1) \dots (i_m
j_m)$. Using this construction and the Lie element property of
$1-(ij)$ it is possible, in particular, to obtain a formula for the
number of ``minimal'' (one-faced) embeddings of any graph; see
\cite[Theorem 2]{BurmanZvonkine}. Probably, one can build a version of
this theory for $3$-graphs, $4$-graphs etc.; Lie element properties of
$\nu_{ijk}$ and $\eta_{ijkl}$ will give some important information
about the embeddings.

\end{document}